\definecolor{mygray}{gray}{.75}
\theoremstyle{plain}
\newtheorem{thm}{Theorem}[section]
\newtheorem{rem}[thm]{Remark}
\newtheorem{lemma}[thm]{Lemma}
\newtheorem{example}[thm]{Example}
\newtheorem{definition}[thm]{Definition}
\newtheorem{?}[thm]{Problem}
\newcommand{\npar}{n_{\text{par}}} 
\newcommand{\mpar}{m_{\text{par}}} 
\newcommand{\cF}{\mathcal{F}}
\newcommand{\cI}{\mathcal{I}}
\newcommand{\cN}{\mathcal{N}}
\newcommand{\cO}{\mathcal{O}}
\newcommand{\cP}{\mathcal{P}}
\newcommand{\cV}{\mathcal{V}}
\newcommand{\ba}{\mathbf{a}}
\newcommand{\bb}{\mathbf{b}}
\newcommand{\bc}{\mathbf{c}}
\newcommand{\bd}{\mathbf{d}}
\newcommand{\bbf}{\mathbf{f}}
\newcommand{\bg}{\mathbf{g}}
\newcommand{\bi}{\mathbf{i}}
\newcommand{\bj}{\mathbf{j}}
\newcommand{\bp}{\mathbf{p}}
\newcommand{\bq}{\mathbf{q}}
\newcommand{\by}{\mathbf{y}}
\newcommand{\bA}{\mathbf{A}}
\newcommand{\bB}{\mathbf{B}}
\newcommand{\bC}{\mathbf{C}}
\newcommand{\bP}{\mathbf{P}}
\newcommand{\N}{\mathbb{N}}
\newcommand{\R}{\mathbb{R}}
\newcommand{\mS}{\mathbb{S}}
\newcommand\balpha{{\boldsymbol{\alpha}}}
\newcommand{\opnorm}{\@ifstar\@opnorms\@opnorm}
\newcommand{\@opnorms}[1]{%
  \left|\mkern-1.5mu\left|\mkern-1.5mu\left|
   #1
  \right|\mkern-1.5mu\right|\mkern-1.5mu\right|
}
\newcommand{\@opnorm}[2][]{%
  \mathopen{#1|\mkern-1.5mu#1|\mkern-1.5mu#1|}
  #2
  \mathclose{#1|\mkern-1.5mu#1|\mkern-1.5mu#1|}
}
\DeclareFontFamily{OMX}{MnSymbolE}{}
\DeclareSymbolFont{MnLargeSymbols}{OMX}{MnSymbolE}{m}{n}
\DeclareFontShape{OMX}{MnSymbolE}{m}{n}{
    <-6>  MnSymbolE5
   <6-7>  MnSymbolE6
   <7-8>  MnSymbolE7
   <8-9>  MnSymbolE8
   <9-10> MnSymbolE9
  <10-12> MnSymbolE10
  <12->   MnSymbolE12
}{}
\DeclareFontShape{OMX}{MnSymbolE}{b}{n}{
    <-6>  MnSymbolE-Bold5
   <6-7>  MnSymbolE-Bold6
   <7-8>  MnSymbolE-Bold7
   <8-9>  MnSymbolE-Bold8
   <9-10> MnSymbolE-Bold9
  <10-12> MnSymbolE-Bold10
  <12->   MnSymbolE-Bold12
}{}
\let\llangle\@undefined
\let\rrangle\@undefined
\DeclareMathDelimiter{\llangle}{\mathopen}%
                     {MnLargeSymbols}{'164}{MnLargeSymbols}{'164}
\DeclareMathDelimiter{\rrangle}{\mathclose}%
                     {MnLargeSymbols}{'171}{MnLargeSymbols}{'171}
\pgfplotsset{compat=1.11}
\DeclareMathOperator*{\Span}{span}
\numberwithin{equation}{section}
\pgfplotsset{select coords between index/.style 2 args={
    x filter/.code={
        \ifnum\coordindex<#1\fi
        \ifnum\coordindex>#2\fi
    }
}}
\title[Linear/Ridge expansions]{Linear/Ridge expansions: Enhancing linear approximations by ridge functions}%
\author{Constantin Greif}
\address{Ulm University, 
	Institute for Numerical Mathematics, 
	Helmholtzstr.\ 18, 89081 Ulm (Germany), 
	{\{constantin.greif,karsten.urban\}@uni-ulm.de}}		
\author{Philipp Junk}
\address{Justus-Liebig-University Giessen, 
	Lehrstuhl Numerische Mathematik, 
	Arndtstr.\ 2, 
	35392 Giessen  (Germany), 
	{Philipp.Junk@math.uni-giessen.de}}
\author{Karsten Urban}
\subjclass{%
65D15,
41A46,
65M60,
49M99
}
\date{\today}
\begin{document}

\begin{abstract}
	We consider approximations formed by the sum of a linear combination of given functions enhanced by ridge functions -- a Linear/Ridge expansion. For an explicitly or implicitly given function, we reformulate finding a best Linear/Ridge expansion in terms of an optimization problem. We introduce a particle grid algorithm for its solution. Several numerical results underline the flexibility, robustness and efficiency of the algorithm.
	
	One particular source of motivation is model reduction of parameterized transport or wave equations. We show that the particle grid algorithm is able to produce a Linear/Ridge expansion  as an efficient nonlinear model reduction.
\end{abstract}

\maketitle

\section{Introduction}
\label{Sec:1}
Many (numerical) approximations rely on linear approximation schemes. For some $f\in H$, $H$ a normed space, one seeks for a possibly good finite-dimensional subspace $H_N\subset H$ and  an approximation $f_N\in H_N$ of $f$. Examples include finite element, finite volume, spectral or discontinuous Galerkin methods. 
In many cases, such linear schemes work very well, in particular if the error of an approximation scheme can be shown to be bounded in terms of the error of the best approximation in $H_N$ (e.g.\ by the famous Ce\'a lemma, \cite{Cea,MR1971217}). If then the error of the best approximation decays fast (e.g.\ by using a Cl\'ement-type operator, \cite{clement1975approximation}), one obtains an efficient (numerical) approximation. 

Of course, one would want to determine a \enquote{best-possible} approximation. For linear approximation schemes, the worst best possible error is known as the \emph{Kolmorogov $N$-width} $d_N(\cF)$ which is defined for a class $\cF\subset H$ of elements as (\cite{Kolmogorov,MR774404})
\begin{align*}
	d_N(\cF) := \inf_{H_N\subset H; \dim(H_N)=N}\, \sup_{f\in\cF}\, \inf_{f_N\in H_N} \| f-f_N\|_H.
\end{align*}
The class $\cF$ could e.g.\ be a smoothness class (a Sobolev or Besov space) or a set of solutions for certain problems with different data (e.g.\ a parametric partial differential equation (PPDE) for different parameter values, see below).

The question if $d_N(\cF)$ decays \enquote{fast} as $N\to\infty$ (e.g.\ $d_N(\cF)=\cO(e^{-N})$) or \enquote{slow} (e.g.\ $d_N(\cF)\simeq N^{-s}$, $0<s<1$) typically depends on some measure of smoothness of the elements of $\cF$, for example the Besov regularity. If $\cF$ is given as a set of solutions of a problem (such as a PPDE), then the decay of $d_N(\cF)$ is a property of the \emph{problem} at hand (not its discretization). This means that linear approximation schemes are not appropriate for problems with a poor decay of the $N$-width. Hence, other schemes are needed, which are necessarily nonlinear.

There is a huge variety of nonlinear approximation schemes, an extensive list goes far beyond the scope of this introduction. For reasons to be described below, we are interested in ridge functions for building nonlinear approximation schemes. More specific, we aim at constructing an approximation scheme consisting of a \emph{linear part} and some \emph{nonlinear enhancement} in terms of a ridge function update. Doing so, such an expansion consisting of a linear and a nonlinear part, we will be able to treat problems with fast and slow decay of $d_N(\cF)$ by the same algorithm. We need to collect some notation in order to motivate our choice. 

\subsection{Linear/Ridge approximation}
We consider a given function $u:\Omega\to\R$, where $\Omega\subset\R^d$ is an open bounded domain and $u\in L_2(\Omega)$ is the minimal requirement, sometimes we need to assume more, e.g.\ $u\in C^{0,1}(\bar\Omega)$, at least piecewise.\footnote{$C^{0,1}(\bar\Omega):=\{ v\in C(\bar\Omega):\, \exists L>0:\,  \| v(x)-v(y)\|\le L\, \| x-y\|\, \text{for all } x,y\in\Omega\}$.} This function can be given either explicitly or implicitly as the (unknown) solution to a given problem. In order to formulate the approximation problem under consideration, let
\begin{align*}
	X_N := \operatorname{span}(\Phi_N) \subset L_2(\Omega),
	\qquad
	\Phi_N:=\{ \varphi_1,...,\varphi_N\}
\end{align*}
be a given linear space of dimension $N\in\N$ with $\varphi_i$, $i=1,...,N$, being given functions (which do not need to be linearly independent). Next, we recall the notion of a ridge function and refer e.g.\ to \cite{BUHMANN1999103,KOLLECK201535,pinkus2015} for overviews and details.

\begin{definition}
	Let $a\in\R^d$, $b\in\R$ and $v: \R \to\R$. Then, $w:\Omega\to\R$, $w(x):=v(a^\top x+b)$ is called \emph{ridge function} with \emph{profile} $v $, \emph{direction} $a$ and \emph{offset} $b$.
\end{definition}

\begin{rem}
	In practical application, we shall assume $v\in L_2(\R)\cap C^{0,1}_{\operatorname{pw}}(\R)$  for the profiles to be discussed below. We will denote its argument by $\xi\in\R$, i.e., $v(\xi)$.
\end{rem}

In addition to $\Phi_N$, we assume that we are given a finite number $M\in\N$ of profiles
\begin{align*}
	\cV_M:=\{ v_1,...,v_M\} \subset L_2(\Omega)
\end{align*}
and consider the approximation problem
\begin{align}\label{eq:approx}
	u(x) \approx \sum_{i=1}^N \alpha_i\, \varphi_i(x) + \sum_{j=1}^M c_j\, v_j(a_j^\top x + b_j) =: u_\delta(x),
	\qquad x\in\Omega.
\end{align}
A function $u_\delta$ of type \eqref{eq:approx} will be called \emph{Linear/Ridge} expansion. Clearly, such a Linear/Ridge approximation depends on the choices of $\Phi_N$ and $\cV_M$ as well as on the coefficients $\alpha_i, c_j\in\R$, the directions $a_j\in\R^d$ and the offsets $b_j\in\R$. However, in this paper we view $\Phi_N$ and $\cV_M$ to be given (e.g.\ by some preprocessing training), so that we only consider the dependency of $u_\delta$ on the directions, offsets and coefficients. In order to shorten notation, we collect this dependency in one index $\delta$.  The main topic of this paper is thus to investigate appropriate choices of coefficients, directions and offsets in \eqref{eq:approx} in order to determine a \enquote{good} approximation of a given function $u$. 

\subsection{Motivation}
The main source of motivation for this paper comes from model order reduction of parameterized partial differential equations (PPDE) by means of the reduced basis method (RBM, \cite{Haasdonk:RB, RozzaRB,QuarteroniRB}). In order to briefly review it, let $L(\mu)$ be a parameterized partial differential operator and $f(\mu)$ be some given right-hand side, where $\mu\in\cP\subset\R^Q$ is some parameter. One seeks for the exact solution $u(\mu)$ of $L(\mu) u = f(\mu)$ -- in a suitable weak sense. Typically, a suitable (i.e., sufficiently fine, we say \enquote{detailed}) discretization is available. However, the computation of a numerical detailed approximation $u^\cN(\mu)$ (with $\cN\gg 1$ degrees of freedom) is too costly at least for approximating $u(\mu)$ for several different values of $\mu$ (\enquote{multi-query} context) and/or extremely fast (e.g.\ in realtime and/or in an embedded system).

The RBM aims at constructing a linear subspace $H_N\subset H$ (if the PPDE is posed on $H$, e.g.\ the Sobolev space $H^1_0(\Omega)$) of dimension $\N\ni N\ll\cN$, which is typically constructed in an offline training phase. Then, in the online realtime or multiquery environment, a reduced approximation $u_N(\mu)$ is computed very rapidly -- as the (Petrov-Galerkin) projection of the detailed solution $u^\cN(\mu)$ onto $H_N$. 
In this context, the above mentioned class reads
\begin{align*}
	\cF = \{ u(\mu):\, \mu\in\cP\}
\end{align*}
and this explains the particular relevance of the decay of $d_N(\cF)$ for model order reduction, e.g.\ \cite{MR3584545,ConvergenceRB}. In particular, it is known that certain elliptic problems admit an exponential decay of $d_N(\cF)$, \cite{MR2877366}, whereas parameterized transport and wave equations show poor decay, \cite{GREIF2019216,OR16}, which motivates the construction of nonlinear model reduction (approximation) schemes in particular for such types of problems, \cite{
Black_2020,
nonlinearmordevorecohen,
Ehrlacher2019NonlinearMR,
transportedsnapshots, 
OHLBERGER2013901,
welper2019transformed}, 
just to mention a few.

We are suggesting a nonlinear update in terms of ridge functions due to several reasons. First of all, ridge functions have a simple structure and turn out the be particularly suitable for parameterized transport and wave-type problems as we shall see below. Second, there is a rich literature for approximation theory with ridge functions, see, e.g.\ \cite{BUHMANN1999103,RFConstantine,KOLLECK201535,pinkus2015}. Finally, ridge functions are closely related to neural networks, which just recently have been investigated for nonlinear model reduction, see e.g.\ \cite{DALSANTO2020109550,Frescaetal,RBNNGitta}.

\subsection{Outline}
The remainder of this paper is organized as follows. In Section \ref{Sec:2}, we collect all required preliminaries and introduce the arising optimization problem. Section \ref{Sec:3} is devoted to the association of the desired quantities (directions and offsets) with particles, which is the basis for the particle grid algorithm which we introduce in Section \ref{Sec:4}. Some results of our various numerical experiments are presented in Section \ref{Sec:5}. We finish with an outlook in Section \ref{Sec:6}.

\section{Preliminaries}
\label{Sec:2}

In order to quantify what has to be understood by `good' approximation, define the set
 \begin{align}\label{eq:approxset}
	U_{N,M} :=
	\bigg\{ 
	u_\delta(x)
	= \sum_{i=1}^N \alpha_i\, \varphi_i(x) + \sum_{j=1}^M c_j\, v_j(a_j^\top x + b_j),\, \ 
	\alpha_i, b_j, c_j\in\R,\, a_j\in\R^d
	\bigg\},
\end{align}
which is a nonlinear subset of $L_2(\Omega)$. We consider the error in the $L_2$-norm and are interested in the/a best approximation in this norm abbreviated by $\|\cdot\|_0\equiv\|\cdot\|_{L_2(\Omega)}$. In order to make (at least an approximation to) a best approximation accessible, we collect all variables by setting $\balpha:=(\alpha_i)_{i=1,...,N} \in\R^N$, $\ba:=(a_j)_{j=1,...,M}\in\R^{dM}$, $\bb:=(b_j)_{j=1,...,M}$, $\bc:=(c_j)_{j=1,...,M}\in\R^M$ and 
\begin{align}\label{eq:coeffs}
	\delta := (\balpha, \ba, \bb, \bc) \in \R^{N+(d+2)M},
\end{align}
write $u_\delta(x)$ as in \eqref{eq:approxset} and consider the \emph{cost function}
\begin{align}\label{eq:costfunction}
	J_u:\R^{N+(d+2)M}\to \R_{\geq 0},
	\qquad
	J_u(\delta) := \| u - u_\delta\|_0^2.
\end{align}
Then, $u_{\delta^*} = \arg \inf_{u_\delta\in U_{N,M}} \| u - u_\delta\|_0$, where $\delta^* =\arg \inf_{\delta\in\R^{N+(d+2)M}} J_u(\delta)$.

Before we continue let us detail an example which also indicates our particular interest in such approximations to be considered here.

\begin{example}[Parametric linear transport problem]\label{Ex:1}
	Consider the homogeneous linear transport equation on the real line with velocity $\mu > 0$, i.e., $\partial_{t} u(t,x)  + \mu\, \partial_{x} u(t,x)  = 0$ on $(0,T) \times \R$, $T>0$ with initial condition $u(0,x) = u_0(x)$, $x\in\R$. In a model reduction framework, one would interpret $\mu$ as a parameter and would want to approximate the solution $u(t,x;\mu)= u_0(x- \mu t)$ either for many velocities $\mu$ and/or in realtime. This is typically done in terms of a linear combination of \enquote{snapshots} $\varphi_i:=u(\cdot,\cdot\,;\mu^{(i)})$, $i=1,...,N$, where the snapshot parameters $\mu^{(i)}$ are chosen in some offline training phase. However, it is known from \cite{OR16} that the Kolmogorov $N$-width is at most $\cO(N^{-1/2})$, which makes such a linear model reduction inefficient.

However, choosing the profile $v_1:=u_0$ and setting $a_1:=(-\mu,1)^\top$, $b_1:=0$, $c_1:=1$ yields $c_1 v_1(a_1^\top (t,x)+b_1) = u_0(x- \mu t)$, i.e., the exact solution. Hence, choosing $M=1$, $\cV_1=\{ v_1\}$, we get that $\inf_\delta J_u(\delta)=0$ independent of the choice of $\Phi_N$.
\end{example}

We stress the fact that we cannot assume in general that the functions $v_1,...,v_M$ are linearly independent, not even pairwise. There are even cases, where $v_i=v_j$ for some distinct indices $i\not=j$ as the following example shows.
\begin{example}[Parametric wave equation]\label{Ex:2}
	Consider the linear wave equation $\partial^2_{tt} u - \mu^2 \, \partial^2_{xx} u=0$ for $t>0$ and $x\in\R$ with initial conditions $u(0)=u_0$ and $\dot{u}(0)=0$. The solution is given by the famous d'Alembert formula as $u(t,x;\mu)=\frac12 ( u_0(x-\mu t) + u_0(x+\mu t))$. Hence, choosing $v_1=v_2=u_0$, $c_1=c_2=\frac12$, $b_1=b_2=0$ as well as $a_1=(-\mu,1)^\top$ and $a_2=(\mu,1)^\top$ yields the exact solution.
	This is an example of two identical profiles causing an exact representation using different directions.
	Besides, also for $\dot{u}(0) \neq 0$, the wave equation is a sum of two, but then different, ridge functions.
	
	Also this problem is particularly interesting since it is known that projection-based (i.e., linear) model reduction techniques do not work in the sense that the decay of the Kolmogorov $N$-width is at most $\cO(N^{-1/4})$, \cite{GREIF2019216}. However, d'Alembert's solution formula is a ridge function.
\end{example}

These examples should motivate the consideration of the optimization problem
\begin{align}\label{eq:OptProb}
	J_u(\delta) \to \min!
	\quad \delta\in\R^{N+(d+2)M}
\end{align}
for a given function $u\in L_2(\Omega)$.

\begin{lemma} \label{lem-AN}
	Let $u\in L_2(\Omega)$. Then, there exists a minimizer $\delta^*$ of \eqref{eq:OptProb} for $J_u$ defined in \eqref{eq:costfunction}.
\end{lemma}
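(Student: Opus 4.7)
The plan is to apply the direct method of the calculus of variations. Since $J_u \ge 0$, there exists a minimizing sequence $(\delta^{(k)})_{k\in\N} \subset \R^{N+(d+2)M}$ with $J_u(\delta^{(k)}) \to j^* := \inf_\delta J_u(\delta)$. The cost $J_u$ is continuous on $\R^{N+(d+2)M}$ (by dominated convergence, using that each profile $v_j \in C^{0,1}_{\operatorname{pw}}(\R) \cap L_2(\R)$, and that $\Omega$ is bounded). Hence it suffices to extract from $(\delta^{(k)})_k$ a subsequence converging in $\R^{N+(d+2)M}$ to some $\delta^*$; continuity then gives $J_u(\delta^*) = j^*$.

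The main obstacle is that $\R^{N+(d+2)M}$ is not compact and $J_u$ is not coercive (e.g.\ fixing $c_j = 0$ one can vary $(a_j,b_j)$ freely), so components of $\delta^{(k)}$ may diverge. I would handle each $j \in \{1,\ldots,M\}$ separately by passing to successive subsequences falling into one of two cases. In case (i), $(a_j^{(k)}, b_j^{(k)})$ stays bounded; Bolzano--Weierstrass then yields $(a_j^{(k)}, b_j^{(k)}) \to (a_j^*, b_j^*)$ and, by dominated convergence, $w_j^{(k)} := v_j(a_j^{(k)\top} \cdot + b_j^{(k)}) \to w_j^* := v_j((a_j^*)^\top \cdot + b_j^*)$ in $L_2(\Omega)$. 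In case (ii), $\|(a_j^{(k)}, b_j^{(k)})\| \to \infty$; a short change-of-variables computation exploiting $v_j \in L_2(\R)$ and the boundedness of $\Omega$ yields $\|w_j^{(k)}\|_{L_2(\Omega)} \to 0$. In case (ii) we set $c_j^* := 0$ and choose $(a_j^*, b_j^*) = (0,0)$, so that the $j$-th ridge contribution is absent in the candidate minimizer and the choice of $(a_j^*, b_j^*)$ does not affect $J_u(\delta^*)$.

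For the linear and ridge coefficients, it is convenient to first replace each $(\balpha^{(k)}, \bc^{(k)})$ by the minimum-norm solution (via the Moore--Penrose pseudo-inverse of the Gramian) of the $L_2$-orthogonal projection problem of $u$ onto $V^{(k)} := \operatorname{span}(\Phi_N \cup \{w_j^{(k)}\}_{j=1}^M)$; this substitution does not increase $J_u$. One then passes to a further subsequence along which these coefficients converge to some $(\balpha^*, \bc^*)$, and sets $\delta^* := (\balpha^*, \ba^*, \bb^*, \bc^*)$.

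The genuinely delicate step—and the principal technical obstacle—is the last one: when several $w_j^{(k)}$ coalesce as $k \to \infty$ (so that the limiting generating set becomes linearly dependent, or some $w_j^*$ coincides with an element of $\Phi_N$), the Gramian becomes singular in the limit and the coefficient map is discontinuous. The minimum-norm convention is what prevents coefficient blow-up, and the required continuity of the pseudo-inverse restricted to the stratum of Gramians of constant rank is the nontrivial tool that lets one identify the limit coefficients. Once this is in place, the value $J_u(\delta^*)$ coincides with $j^*$ and $\delta^*$ is the desired minimizer.
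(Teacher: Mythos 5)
Your overall strategy (direct method: minimizing sequence, case split into bounded and escaping ridge parameters, continuity of $J_u$ on bounded parameter sets) is far more explicit than the paper's proof, which consists of the single assertion that $U_{N,M}$ is closed in $L_2(\Omega)$ so that ``standard arguments'' apply. However, the step you yourself single out as the principal obstacle is not actually closed by the tool you invoke, and it cannot be: continuity of the Moore--Penrose pseudo-inverse on a stratum of Gramians of constant rank is of no help, because along a minimizing sequence the Gramians $G_k$ are typically of \emph{full} rank while their limit $G^*$ is rank-deficient; the sequence leaves the stratum precisely at the limit point, where $G\mapsto G^{+}$ is discontinuous and $\|G_k^{+}\|\to\infty$. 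The minimum-norm convention bounds the projections $P_{V^{(k)}}u$ (by $\|u\|_0$), not the coefficients, and the limit of $P_{V^{(k)}}u$ need not belong to $U_{N,M}$.

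This gap is not a technicality. Take $N=0$, $M=2$, $d=1$, $\Omega=(0,1)$, $v_1=v_2=v$ a Gaussian, and $u=v'$. Choosing $a_1^{(k)}=a_2^{(k)}=1$, $b_1^{(k)}=h_k\to 0$, $b_2^{(k)}=0$ and $c_1^{(k)}=-c_2^{(k)}=1/h_k$ gives $u_{\delta^{(k)}}\to v'$ in $L_2(\Omega)$, so $\inf_\delta J_u(\delta)=0$; but $v'$ is not a sum of two Gaussians (compare Fourier transforms), so no minimizer exists. The escaping object is a derivative of the profile with respect to the offset, and it is exactly what your coalescing-Gramian scenario produces in the limit; no choice of $\delta^*$ recovers it. So you have correctly located the crux, but ``once this is in place'' cannot be made good in the stated generality --- and the same example shows that the paper's unproved claim that $U_{N,M}$ is closed is also false in general (it holds for special profiles such as $v=\cos$, whose derivative is again a shifted copy of $v$). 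A correct statement would either impose closedness of the ridge dictionary generated by each $v_j$ as a hypothesis, or assert existence of a minimizer only in the $L_2$-closure of $U_{N,M}$.
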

\begin{proof}
	Since $L_2(\Omega)$ is a Hilbert space and $U_{N,M}\subset L_2(\Omega)$ is closed, the claim follows by standard arguments.
\end{proof}

Note, that $U_{N,M}$ is not necessarily convex and therefore the minimizer in Lemma \ref{lem-AN} is not necessarily unique.
We are now going to determine an approximation $u_\delta\in U_{N,M}$ to the given function $u$ step by step. 

\begin{rem}
In practice we might just have access to the function $u$ indirectly by a quantity like the residuum of a PDE. In this case, one would not use the $L_2$-error as cost function $J_u $ but rather the residuum (if computable) or an appropriate error estimator.
\end{rem}
\subsection{Given directions and offsets}
As a first step, let us assume that the directions $\ba$ and offsets $\bb$ according to the profiles $\cV_M$ would be given. Then, the optimal coefficients $\balpha$ and $\bc$ for $\ba$, $\bb$ are easily determined as we shall see next.

\begin{lemma}\label{La:optcoeff}
	Let $u\in L_2(\Omega)$ and $\Phi_N$, $\cV_M$, $\ba =(a_j)_{j=1,...,M}\in\R^{dM}$ as well as $\bb =(b_j)_{j=1,...,M}$ be given. 
	Define the matrices $\bA_{i,i'} := (\varphi_i, \varphi_{i'})_0$, $i,i'=1,...,N$ 
		and $\bC_{j,j'} = (\bC(\ba,\bb))_{j,j'} := ( v_j(a_j^\top\cdot+b_j), v_{j'}(a_{j'}^\top\cdot+b_{j'}))_0$ 
		as well as $\bB_{i,j} = (\bB(\ba,\bb))_{i,j} := ( \varphi_i, v_{j}(a_{j}^\top\cdot+b_j))_0$, $i,i'=1,...,N$, $j,j'=1,...,M$. 
	Then, the optimal coefficients 
	$\balpha^*=\balpha^*(u,\ba,\bb)=(\alpha_1^*,...,\alpha^*_N)^\top$ 
	and $\bc^*=\bc^*(u,\ba,\bb)=(c_1^*,...,c_M^*)^\top$ 
	are given as the solution of the linear system of equations
	\begin{align}\label{eq:LinSys}
		\begin{pmatrix}
			\bA 			& \bB \\
			\bB^\top 	& \bC
		\end{pmatrix}
		\begin{pmatrix}
			\balpha \\
			\bc
		\end{pmatrix}
		= 
		\begin{pmatrix}
			\bbf(u) \\
			\bg(u)
		\end{pmatrix},
	\end{align}	
	where the right-hand side is given by 
	$\bbf(u)_i:= (u,\varphi_i)_0$, $i=1,...,N$,  
	and $\bg(u, \ba, \bb)_j :=( u, v_{j}(a_{j}^\top\cdot+b_j))_0$, $j=1,...,M$.
\end{lemma}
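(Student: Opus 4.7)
The plan is to observe that once $\ba$ and $\bb$ are fixed, the approximation $u_\delta$ depends \emph{linearly} on the remaining unknowns $(\balpha,\bc)$. Concretely, introducing the shorthand $\psi_j := v_j(a_j^\top\cdot + b_j) \in L_2(\Omega)$ for $j=1,\dots,M$, the candidate functions $u_\delta$ range over the finite-dimensional subspace
\[
	S(\ba,\bb) := \operatorname{span}\{\varphi_1,\dots,\varphi_N,\psi_1,\dots,\psi_M\} \subset L_2(\Omega).
\]
Hence the minimization of $J_u$ over $(\balpha,\bc)$ reduces to the classical problem of best $L_2$-approximation of $u$ from the (closed) subspace $S(\ba,\bb)$ of a Hilbert space.

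The next step is to invoke the standard Hilbert-space characterization of the best approximation: $u_{\delta^*}$ is optimal if and only if the residual $u - u_{\delta^*}$ is $L_2$-orthogonal to every generator of $S(\ba,\bb)$. This yields two families of normal equations:
\begin{align*}
	(u - u_{\delta^*}, \varphi_i)_0 &= 0, \qquad i=1,\dots,N, \\
	(u - u_{\delta^*}, \psi_j)_0 &= 0, \qquad j=1,\dots,M.
\end{align*}
Substituting the expansion $u_{\delta^*}(x) = \sum_{i'} \alpha_{i'}^*\varphi_{i'}(x) + \sum_{j'} c_{j'}^* \psi_{j'}(x)$ and using bilinearity of the inner product converts each orthogonality condition into a linear relation in $(\balpha^*,\bc^*)$. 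Reading off the coefficients via the definitions $\bA_{i,i'} = (\varphi_i,\varphi_{i'})_0$, $\bB_{i,j} = (\varphi_i,\psi_j)_0$, $\bC_{j,j'} = (\psi_j,\psi_{j'})_0$ and $\bbf(u)_i = (u,\varphi_i)_0$, $\bg(u,\ba,\bb)_j = (u,\psi_j)_0$ assembles exactly the block system \eqref{eq:LinSys}.

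The only subtlety, which I would address at the end, is that the Gram matrix on the left-hand side of \eqref{eq:LinSys} is in general only positive \emph{semi}-definite, not positive definite, because the paper explicitly allows the $\varphi_i$ and the $\psi_j$ to be linearly dependent (cf.\ Example \ref{Ex:2}). I would therefore not claim uniqueness of $(\balpha^*,\bc^*)$; solvability is guaranteed because the right-hand side lies in the range of the Gram operator (projection onto $S(\ba,\bb)$ always exists in a Hilbert space), and any solution produces the same minimizer $u_{\delta^*}$. This is really the only place where one has to be careful, but it amounts to a standard remark rather than a genuine obstacle — the proof is essentially the normal-equation characterization of orthogonal projection.
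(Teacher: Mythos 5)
Your proof is correct, and it takes a somewhat different route from the paper's. The paper expands the reduced cost $\tilde J_u(\balpha,\bc)=\|u\|_0^2-2\,\bbf(u)^\top\balpha-2\,\bg(u)^\top\bc+\balpha^\top\bA\balpha+\bc^\top\bC\bc+2\,\balpha^\top\bB\bc$ as an explicit quadratic form, computes its gradient, identifies the vanishing of the gradient with the system \eqref{eq:LinSys}, and then checks that the Hessian (twice the Gram matrix) is positive semi-definite, so that any critical point is a global minimizer. You instead recognize the problem as best $L_2$-approximation from the finite-dimensional subspace spanned by the $\varphi_i$ and $\psi_j:=v_j(a_j^\top\cdot+b_j)$ and invoke the orthogonality characterization of the projection, which yields the normal equations directly. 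The two arguments are equivalent in substance --- setting the gradient of the quadratic to zero \emph{is} the normal-equation condition --- but each buys something the other leaves implicit. The paper's computation makes the convexity of the reduced cost explicit and is entirely self-contained; your version gets existence of a minimizer and solvability of the (possibly singular) system for free from the projection theorem, a point the paper's proof does not address: it shows that critical points coincide with solutions of \eqref{eq:LinSys} but never argues that the right-hand side lies in the range of the Gram matrix. Your closing remark on non-uniqueness of $(\balpha^*,\bc^*)$ versus uniqueness of the projected function $u_{\delta^*}$ is also accurate and consistent with the paper's allowance of linearly dependent $\varphi_i$ and identical profiles (cf.\ Example \ref{Ex:2}).
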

\begin{proof}
	Let $\ba$, $\bb$ be fixed and denote $\tilde\delta:= (\balpha, \bc)$ as well as $\delta=(\balpha, \ba, \bb, \bc)$ as in \eqref{eq:coeffs}.  We consider the reduced cost function $\tilde{J}_u(\tilde\delta):= J_u(\delta)$ as a function of $\tilde\delta$ only. Then, $\tilde J_u(\tilde\delta) = \| u-u_{\tilde\delta}\|_0^2= \| u\|_0^2 - 2(u,u_{\tilde\delta})_0 + \| u_{\tilde\delta}\|_0^2$ and
	\begin{align*}
		(u,u_{\tilde\delta})_0
		&= \sum_{i=1}^N \alpha_i\, (u,\varphi_i)_0
			+ \sum_{j=1}^M c_j\, (u, v_j(a_j^\top\cdot+b_j))_0
			= \bbf(u)^\top \balpha + \bg(u)^\top \bc,\\
		\| u_{\tilde\delta}\|_0^2
		&= \sum_{i,i'=1}^N \alpha_i\, \alpha_{i'} (\varphi_i,\varphi_{i'})_0
			+ \sum_{j,j'=1}^M c_j\, c_{j'}\, (v_j(a_j^\top\cdot+b_j), v_{j'}(a_{j'}^\top\cdot+b_j))_0\\
		&\qquad + 2 \sum_{i=1}^N \sum_{j=1}^M\alpha_i\,c_j (\varphi_i,v_j(a_j^\top\cdot+b_j))_0 
		= \balpha^\top \bA \balpha + \bc^\top \bC \bc
			+ 2 \balpha^\top \bB \bc,
	\end{align*}
	so that $\tilde{J}_u({\tilde\delta}) = \| u\|_0^2 - 2\, \bbf(u)^\top \balpha - 2\, \bg(u)^\top \bc + \balpha^\top \bA \balpha + \bc^\top \bC \bc + 2\, \balpha^\top \bB \bc$. 
	Hence, since $\bA$ and $\bC$ are symmetric,
	\begin{align*}
		\nabla \tilde{J}_u(\tilde\delta)
		&= \begin{pmatrix} \partial_{\balpha_N} J_u(\delta) \\ \partial_{\bc_M} J_u(\delta) \end{pmatrix}
		= \begin{pmatrix}
			-2\, \bbf(u) + \bA \balpha + \bA^\top \balpha + 2\, \bB  \bc\\
			-2\, \bg(u) + \bC \bc + \bC^\top \bc + 2\, \bB^\top \balpha
		\end{pmatrix} \\
		&= 2 \begin{pmatrix}
			-\bbf(u) + \bA \balpha +  \bB  \bc \\
			-\bg(u) + \bC \bc+  \bB^\top \balpha
		\end{pmatrix} 
		= - 2 \begin{pmatrix}
			\bbf(u)  \\
			\bg(u) 
		\end{pmatrix} 
		+ 	2 \begin{pmatrix}
			\bA			& \bB \\
			\bB^\top 	& \bC
		\end{pmatrix}
		\begin{pmatrix}
			\balpha \\
			\bc
		\end{pmatrix},
	\end{align*} 
	i.e., $\nabla \tilde{J}_u(\tilde\delta)=0$ if and only if \eqref{eq:LinSys}. Finally, 
	$\nabla^2 \tilde{J}_u(\tilde\delta) = 2 \begin{pmatrix}
			\bA 			& \bB \\
			\bB^\top 	& \bC
		\end{pmatrix}$ 
	is symmetric and since for $x \in \R^{M + N} $ 
	$$ x^{\top} \begin{pmatrix}
			\bA 			& \bB \\
			\bB^\top 	& \bC
		\end{pmatrix} x = \bigg\| \sum_{i=1}^N x_i \, \varphi_i
			+ \sum_{j=1+N}^M x_j \, v_j(a_j^\top\cdot+b_j) \bigg\|_0^2 \geq 0 ,$$
	$\nabla^2 \tilde{J}_u(\tilde\delta)$ is also positive semi-definite, which concludes the proof.
\end{proof}

This result shows that the coefficients are determined once directions and offsets are given. Setting  $\delta(u,\ba,\bb) := (\balpha^*(u,\ba,\bb), \ba, \bb, \bc^*(u,\ba,\bb))$, we consider the reduced cost function depending solely on directions and offsets
\begin{align}\label{eq:reducedcostfunction}
	\hat J_u:\R^{(d+1)M}\to\R_{\geq 0},
	\qquad
	\hat J_u(\ba,\bb) := \| u - u_{\delta(u,\ba,\bb)}\|_0^2
\end{align}
along with the reduced optimization problem
\begin{align}\label{eq:reducedoptproblem}
	\hat J_u (\hat\delta) \to \min!
	\qquad
	\hat\delta = (\ba,\bb) \in\R^{(d+1)M}.
\end{align}
Hence, we are left with finding the directions $\ba$ and the offsets $\bb$.

\section{Directions, offsets and particles}
\label{Sec:3}
Since the determination of directions and offsets amounts to solving a complex optimization problem, we aim at using a very well-known heuristic method, the particle swarm algorithm, see e.g.\ \cite{KennedyEberhart95, olsson2011particle}. In order to reduce computational complexity, we arrange our \enquote{particles} (which are associated to the collection of all directions $a_j\in\R^d$ and offsets $b_j\in\R$, $j=1,...,M$) in a dynamic grid. Hence, we call the arising scheme \emph{particle grid} method. 
 For each profile, we collect the direction and the offset in one vector $d_j:=(a_j,b_j)\in\R^{d+1}$. These vectors are then associated to some component $p_j\in (-1,1)^{D}=:\mS_{D}$. The vector $\bd=(d_j)_{j=1,...,M}\in\R^{DM}$ of all directions and offsets is then associated to one \emph{particle}. The dimension $D$ is at most $d+1$, but can also be smaller if the problem at hand fixes some components of $d_j$, see Example \ref{Ex:fixPart} below.

\begin{example}\label{Ex:fixPart}
	Consider the linear transport and wave equation in one space-dimension from Example \ref{Ex:1} and \ref{Ex:2}, respectively.  The underlying domain is $\Omega=\R^+\times\R$ and the variables read $(t,x)\in\Omega$ indicating time and space. Hence, in both cases, any direction takes the form $a=(a_t,a_x)$ and any ridge function reads $v(a_t\, t + a_x\, x+b)$ for some offset $b\in\R$. 
	
	The ridge functions should be consistent with the initial condition $u(0,x)=u_0(x)$, which implies that $v:=u_0$ and $a_x:= 1$ as well as $b:=0$. This shows that we do not need a full vector $d=(a_t,a_x,b)$ of dimension $d+1=3$, but that a single parameter suffices to represent each combination of direction and offset with a particle. 
	
In fact, in both cases, profiles take the form $v(x \pm \mu t)$, which means that we choose $a=(\pm\mu,1)^\top$ and $b=0$ with $\mu\in\R$.
\end{example}

The association of particles to all vectors $d_j$, $j=1,...,M$ (i.e., for all $M$ profiles), is done by constructing an appropriate mapping
 \begin{align*}
 	\pi: \mS_D \to \R^{d+1},
	\qquad
	\pi(p_j) =: (a_j,b_j)^\top.
 \end{align*}
Of course, on can construct several such mappings.

\begin{example}
For $D=d+1$, we frequently used the smooth transformation $\pi: (-1,1)^{d+1}\to\R^{d+1}$ defined as $\pi_i(s):= \tan(\tfrac{\pi}{2} s )$ for each component $i=1,...,d+1$. 
\end{example}
 
 \begin{example}[Example \ref{Ex:fixPart} continued]
 	In order to associate the real-valued parameter $\mu$ to a particle $p\in(-1,1)$, we can define $\pi: \mS_1\to\R^3$ by $p\mapsto (\tan(\tfrac{\pi}{2} {p}) ,1, 0)^\top$, i.e., $\mu=\tan(\tfrac{\pi}{2} {p})$.
\end{example}

This setting now allows us to reformulate the reduced optimization problem \eqref{eq:reducedoptproblem} in terms of the particles, i.e., $\hat J_u(\bp)\to\min!$ for $\bp=(p_1,...,p_M)\in\mS_D^{M}$, where 
\begin{align}\label{eq:CostFunProfiles}
	\hat J_u: \mS_D^{M}\to\R_{\geq 0},
	\quad
	\hat J_u(\bp):= \check J_u(\tau(p_1),...,\tau(p_M)),
\end{align}
where the arguments are to be understood in the following manner:
\begin{align*}
	\R^{(d+1)M} \ni \hat\delta
	=\begin{pmatrix}
		a_1 & a_2 & \cdots & a_M \\
		b_1 & b_2 & \cdots & b_M
	\end{pmatrix}
	=(d_1, ...,d_M) 
	= (\tau(p_1),...,\tau(p_M)).
\end{align*}
We call such a $\bp$ \emph{particle}.

\begin{example}\label{Ex:FindingDirections}
	In order to illustrate the challenges for solving the reduced optimization problem for \eqref{eq:CostFunProfiles}, let us consider three examples, where the function $u$ to be approximated is given as a sum of ridge functions (i.e., which can even be represented exactly). We consider the two-dimensional case $\Omega = (0,1)^2$ and $M=2$ profiles. The data is collected in the following table.
	
\begin{center}
	\begin{tabular}{r|l|l|l}
		Case & $u(x_1,x_2)$ & $v_1(\xi)$ & $v_2(\xi)$ \\[2pt] \hline 
		1 	& $1.6\, \cos(10x_1 + \frac{10}{3} x_2) + 0.8\, \cos (10x_1 -5x_2)$
			\vbox to 11pt {\vfil}
			& $\cos(10\xi)$
			& $\cos(10\xi)$ \\[3pt] 
		2	& $\cos(x_1+2x_2) + \cos(x_1-0.5\,x_2) $
			& $\cos(\xi)$
			& $\cos(\xi)$ \\[3pt]
		3 	& $5\, |x_1 - \frac12 x_2 - \frac12| + 0.6\, (x_1+\frac13 x_2)^2$
			& $|\xi-\frac12|$
			& $\xi^2$	
	\end{tabular}  
\end{center}
	
Given $v_1$ and $v_2$, setting $b_1=b_2=0$, $(a_1)_1=(a_2)_1=1$ we scatter $\mS_1^2$ by selecting particles $p=(p_1,p_2)\in (-1,1)^2$ for the remaining two unknowns $(a_1)_2$ and $(a_2)_2$. Each such particle defines a direction for which we define $u_\delta$ and compute the value of the cost function, i.e., the error $\| u-u_\delta\|_0$. The results are depicted in Figure \ref{Fig:1}. 

The functions $u$ are shown in the top row. The reduced cost function $\hat J_u$ as a function of $p\in(-1,1)^2$ is visualized in the second row, scattered on a grid of $129^2$ points. Solving for the two directions and offsets would thus amount finding a global minimum of the cost functions shown in the second row. As we see, we may face multiple local minima, even multiple global minima (in particular in cases 1 and 2, where $v_1=v_2$, so that we see symmetry), steep gradients and several highly localized phenomena. 
	\begin{figure}[!htb]
		\begin{center}
			\includegraphics[width=0.3\textwidth]{./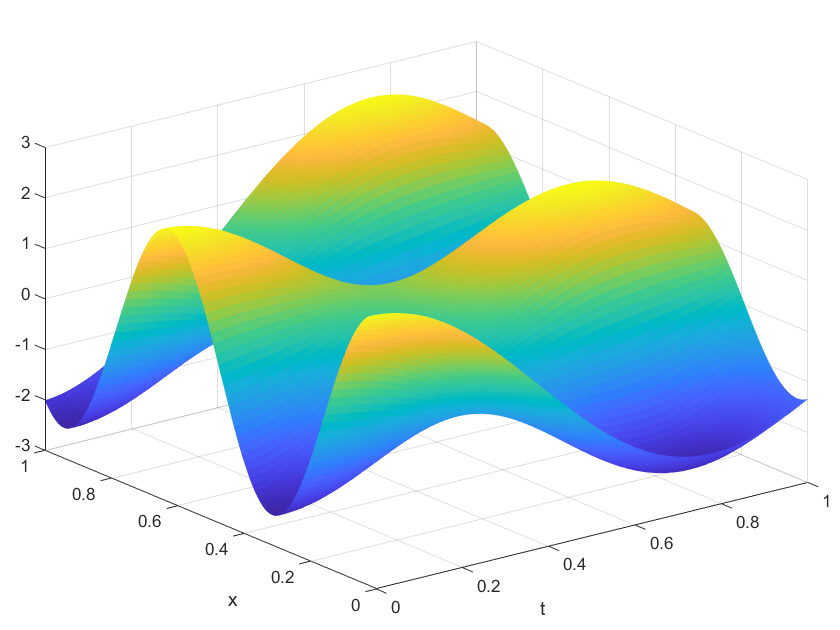}
			\includegraphics[width=0.3\textwidth]{./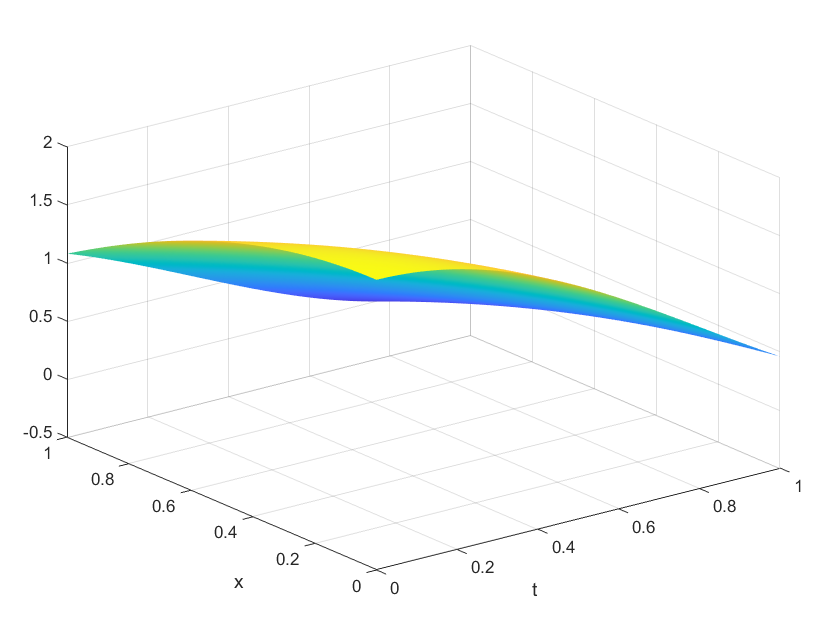}
			\includegraphics[width=0.3\textwidth]{./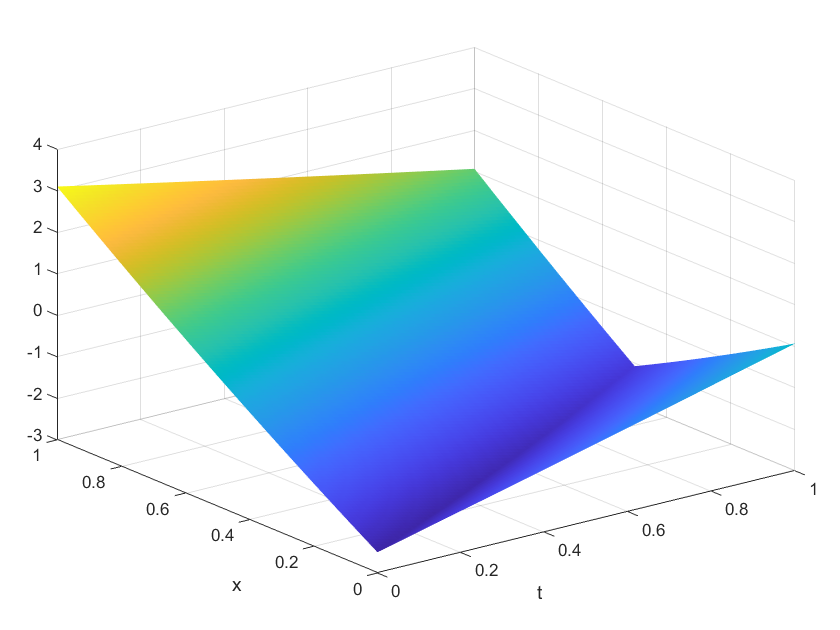}
	
			\includegraphics[width=0.3\textwidth]{./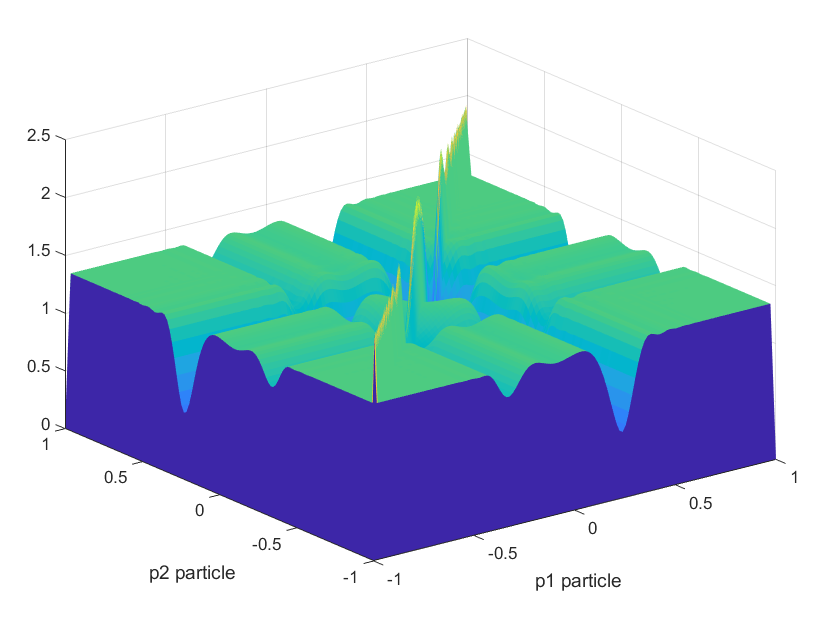}
			\includegraphics[width=0.3\textwidth]{./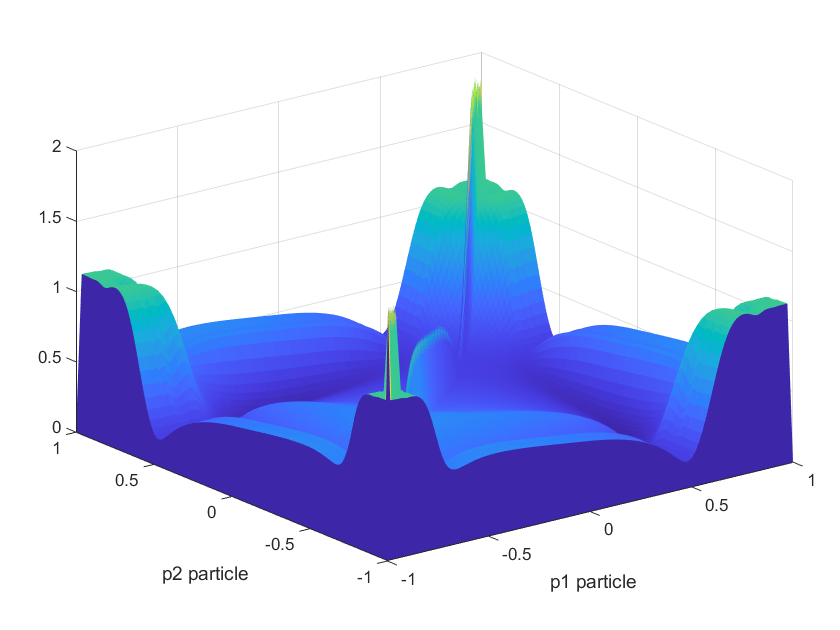}
			\includegraphics[width=0.3\textwidth]{./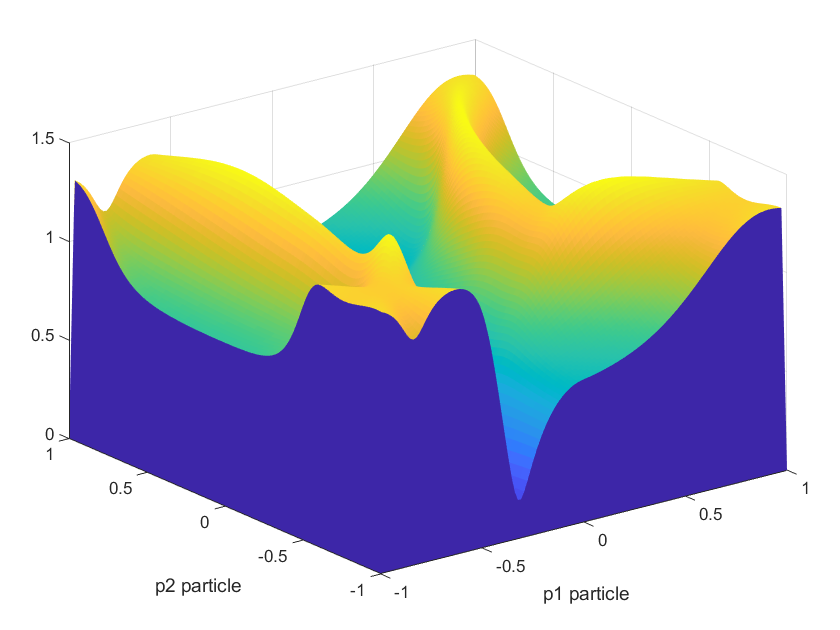}
			\caption{Cases 1-3 (from left to right): function to be approximated  (top row) and corresponding reduced cost functions (bottom row).\label{Fig:1}}
		\end{center}
	\end{figure}
\end{example}

As we see from Example \ref{Ex:FindingDirections}, we may face a truly complex optimization problem. In particular, we cannot hope to use gradient-based optimization techniques, at least not in a straightforward manner.

\section{A particle grid algorithm}
\label{Sec:4}
As already motivated earlier, we are now going to describe an algorithm which has shown good performance in determining at least a good approximation to a global minimum of $\hat J_u$ in \eqref{eq:CostFunProfiles}. Recall, that we are facing an optimization problem in $\mS_D^{M}\cong \mS_M^{D} \cong \mS_{DM} = (-1,1)^{DM}$. Hence, we define the optimization dimension as $P:=DM$.

The algorithm produces a sequence of \emph{particle grids}
\begin{align*}
	\bP^{(0)} \to \bP^{(1)} \to \cdots \to \bP^{(k)} \to \bP^{(k+1)} \to \cdots,
\end{align*}
where each grid (i.e., a swarm in form of a grid) $\bP^{(k)}$ consists of $\mpar$ particles in $\mS^P$. We choose $\npar$ nodes in each dimension, i.e.,  
\begin{align*}
	 \mpar = \npar^{P} 
	\quad\text{for}\quad \npar \in \N, 
\end{align*}
particles in $\mS^P=(-1,1)^{P}$. Then, we initialize the first particle swarm $\bP^{(0)}$ by taking the tensor product, yielding a regular grid. Each particle has uniquely defined next neighbors in each diagonal direction. This next neighbor relation does not change in the course of the iteration, as we will see in Lemma \ref{Lem:neighborrelation}. This means that each swarm is a \emph{grid} whose internal geometry does not change even if the position of each particle may vary. We may associate each particle grid $\bP^{(k)}$ with a tensor of dimension $P$ (e.g., a matrix for $P=2$). 

If $\bP^{(k)} = \{ \bp^{(k)}_\bi\in (-1,1)^{P}: \bi=(i_1,...,i_P)\in \{1,...,\npar \}^{P} \}$, and each particle takes the form $\bP^{(k)}\ni \bp^{(k)}_\bi =((\bp^{(k)}_\bi)_1,...,(\bp^{(k)}_\bi)_P)^\top\in (-1,1)^{P}$, the algorithm can be described as follows: 
Choose $\delta \in (0,\frac12)$. Then, for each particle $\bp^{(k)}_\bi\in \bP^{(k)}$, we consider the value of the particle and of the at most $3^{P}-1$ surrounding particles $\bp_\bj^{(k)}$ with $\bj=(j_1,...,j_P)$ where $|i_s - j_s | \leq 1$ for all $1 \leq s \leq P$. We collect the indices of these particles in a set $\cI(\bi) := \{ \bj=(j_1,...,j_P)\in \{ 1,...,n_p\}^P:\, |i_s - j_s | \leq 1 , \  1 \leq s \leq P \}$ and set
\begin{align}\label{smallesnb}
	\bq_\bi^{(k+1)} 
	:= \arg \min_{ \bj\in \cI(\bi)} \hat J_u( \bp_\bj^{(k)} ),
	\qquad
	|\cI(\bi)|\le 3^P-1.
\end{align}
Subsequently we get the next particle by the step:
\begin{equation}\label{stepparticle}
	\bp_\bi^{(k+1)} := (1-\delta) \bp_\bi^{(k)}  + \delta \bq_\bi^{(k+1)} .
\end{equation}

For the points on the boundary of the grid we need to slightly adapt this procedure. 
Technically, we view particles at opposite sides of the boundary as being neighbors, which means that, for example in one dimension the particle on the most left is considered as the neighbor of the particle on the most right. For dimension $1 \leq s \leq P$, a particle $\bp_\bj^{(k)}$ with $\bj=(j_1,...,j_{s-1},1,j_{s+1},...,j_P)$ is a neighbor of $\bp_\bi^{(k)}$ with $\bi=(j_1,...,j_{s-1}, \npar ,j_{s+1},...,j_P)$. This means that we are sticking the boundary together, so that we get a grid where in each dimension each particle has neighbors to the left and to the right, similar to a torus.

This concludes the description of one iteration $\bP^{(k)} \to \bP^{(k+1)}$, which is terminated after a predefined number $K\in\N$ of iterations with the output 
\begin{align}\label{outputapp}
	\bp_{\text{app}} := \arg \min_{ \bi\in \{1,...,n_p\}^P} \hat J_u(\bp_{\bi}^{(K)}).
\end{align}
which is used as an approximation for some minimizer $\bp^*$. 
Here, we often choose $\delta = \frac{1}{3}$, which turned out to be a reasonable choice as it always ensures a positive distance of neighboring particles. We summarize the method in Algorithm \ref{basisgen2}.
\begin{algorithm}[H] 
	\caption{Particle grid algorithm\label{basisgen2}}
	 \begin{algorithmic}[1]
	 	 \Require{Number of particles $\mpar = \npar^{P}$, maximal number of iterations $K$,\newline parameter $\delta\in (0,\frac12)$}
		 \State Initialize $\bP^{(0)} = \{ \bp^{(0)}_\bi: \bi\in \{1,..., \npar \}^P \}$ equidistantly
		 \For{$k = 0$ \textbf{to} $K-1$}
		 \For{all indices $\bi\in \{1,...,\npar\}^P$}
			\State Compute $\bq_\bi^{(k+1)}$ as in  \eqref{smallesnb}
		 	\State Calculate $\bp_\bi^{(k+1)}$ by \eqref{stepparticle};
				set $\bP^{(k+1)} = \{ \bp^{(k+1)}_\bi: \bi \in \{1,...,\npar \}^P\}$
			\State Compute $\hat J_u( \bp_\bi^{(k+1)} )$
		 \EndFor
		 \State Get $\bp_{\text{app}}$ as in formula \eqref{outputapp}
		 \EndFor
		 \Ensure{Approximation particle $\bp_{\text{app}}$}
	 \end{algorithmic}
\end{algorithm}

Figure \ref{Fig:ParticleSwarm} shows $6$ stages of Algorithm \ref{basisgen2} for one specific example with $P=DM=2$ and $\npar=10$. We start with a regular grid on top left and indicate how the algorithm moves the particles by showing the states for iterations $k=1$,  $k=5$, $k=25$,  $k=56$ and $k=90$. As we see, not all particles are concentrated in one point even for this case having a single global minimum. The reason is that sticking the opposite boundaries together prevents a concentration. However, most points are very close to the global minimum or on lines parallel to the coordinate axes through the point of global minimum.
 \begin{figure}[!htb]
		\begin{center}
				\includegraphics[width=0.30\textwidth]{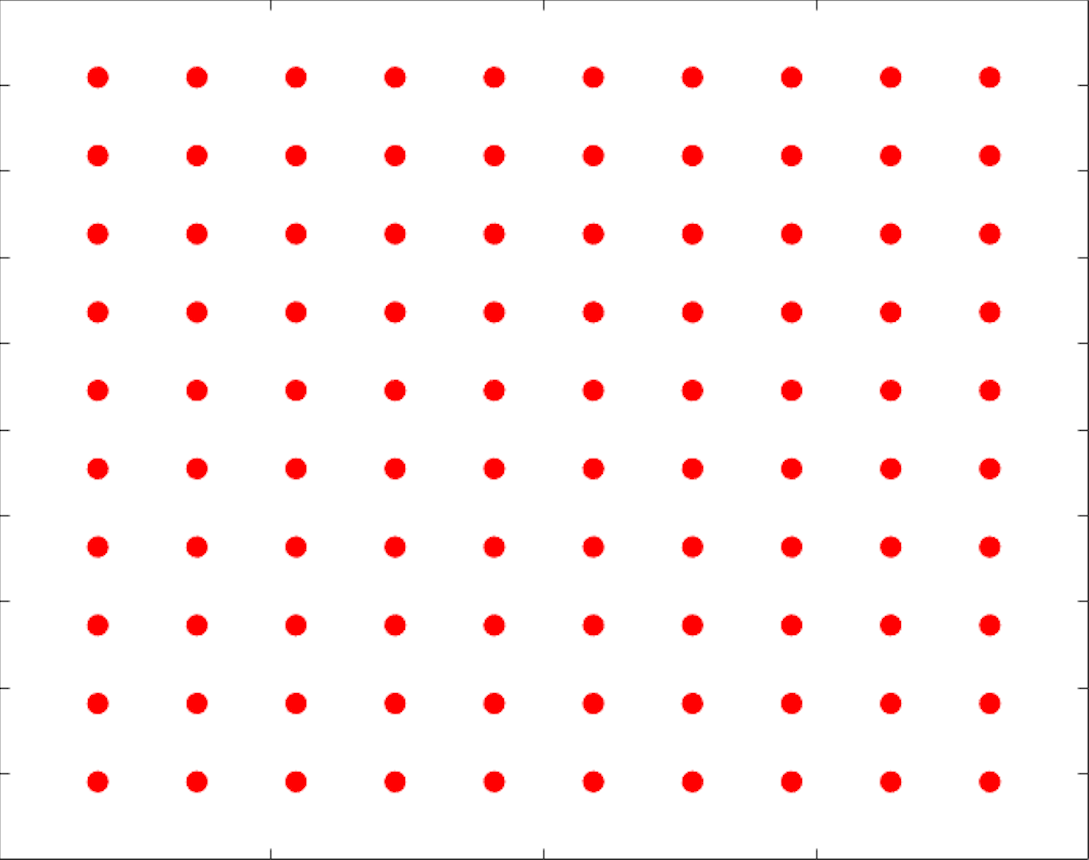}  \hspace{0.15cm}
				\includegraphics[width=0.30\textwidth]{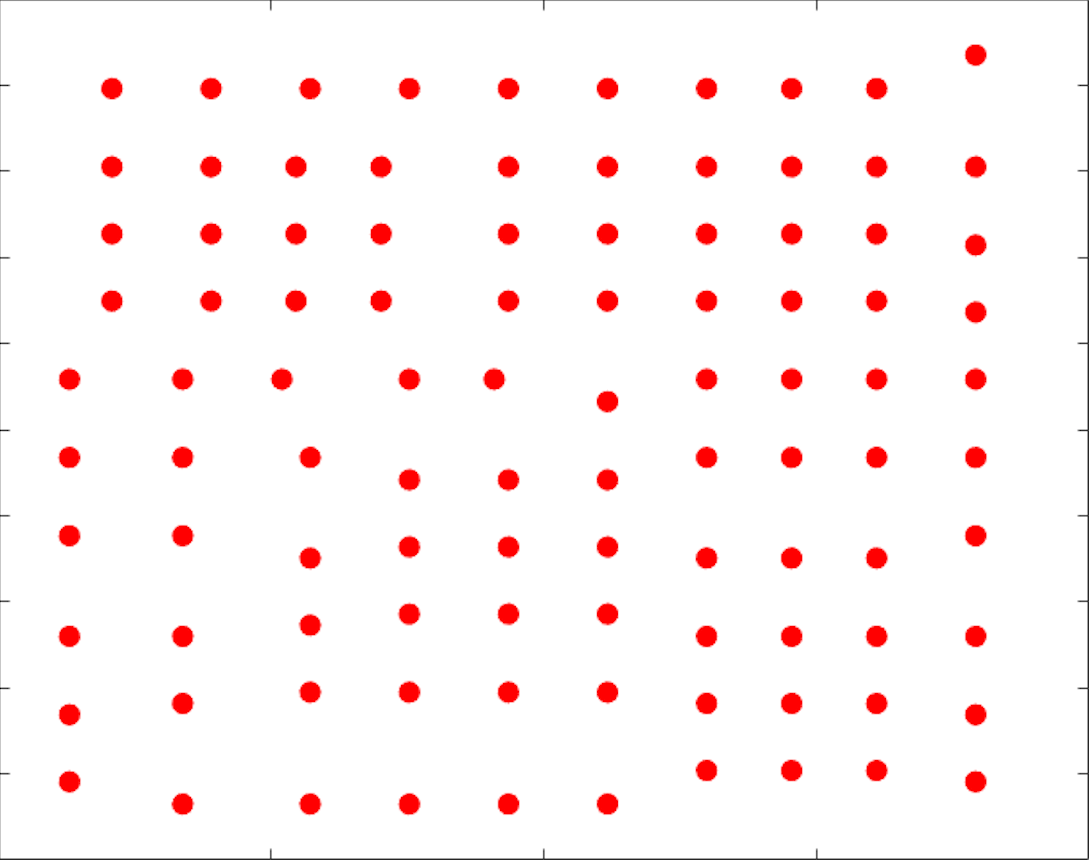}  \hspace{0.15cm}
				\includegraphics[width=0.30\textwidth]{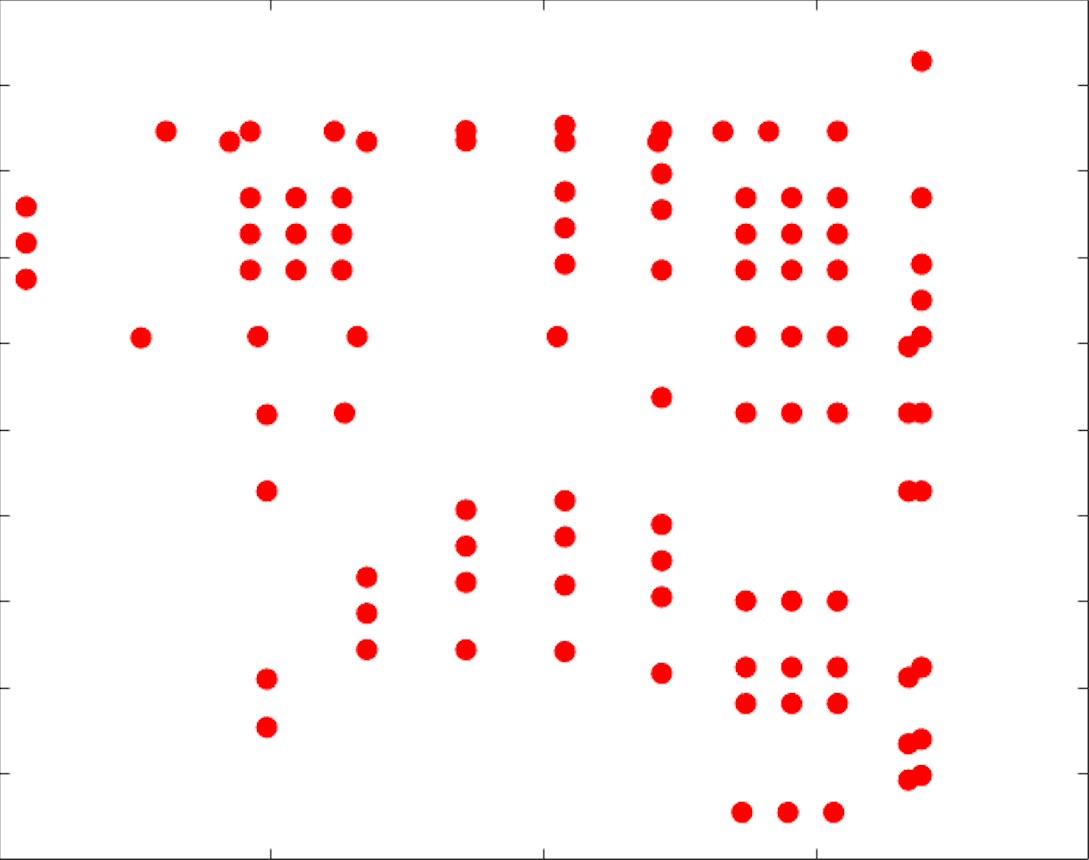}
				
				\includegraphics[width=0.30\textwidth]{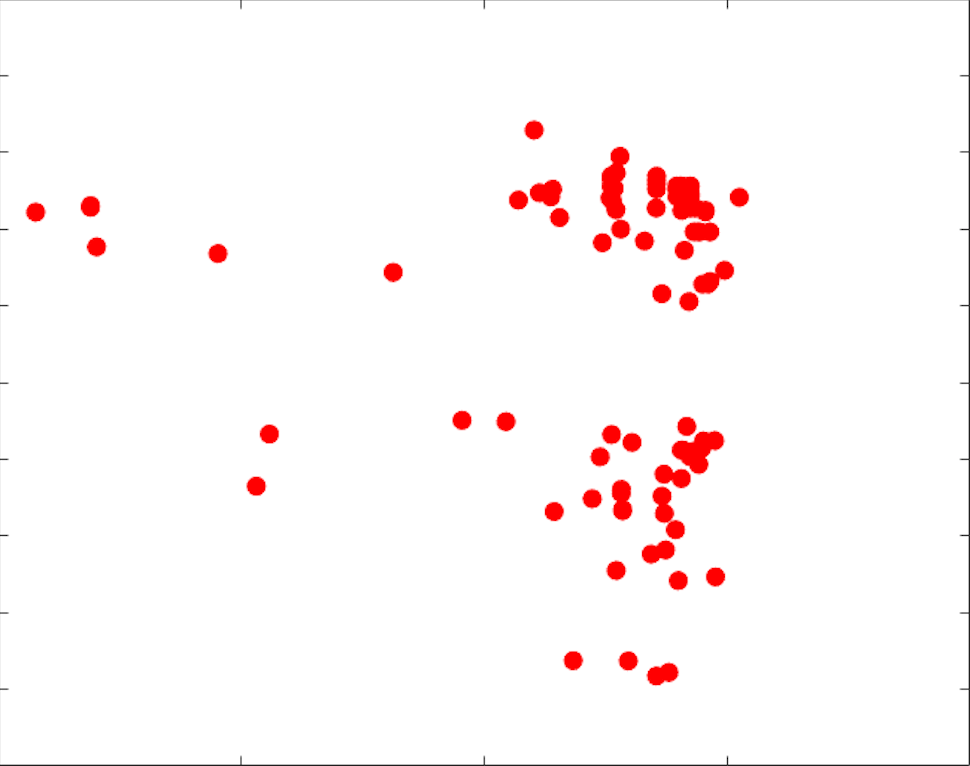} \hspace{0.15cm}
				\includegraphics[width=0.30\textwidth]{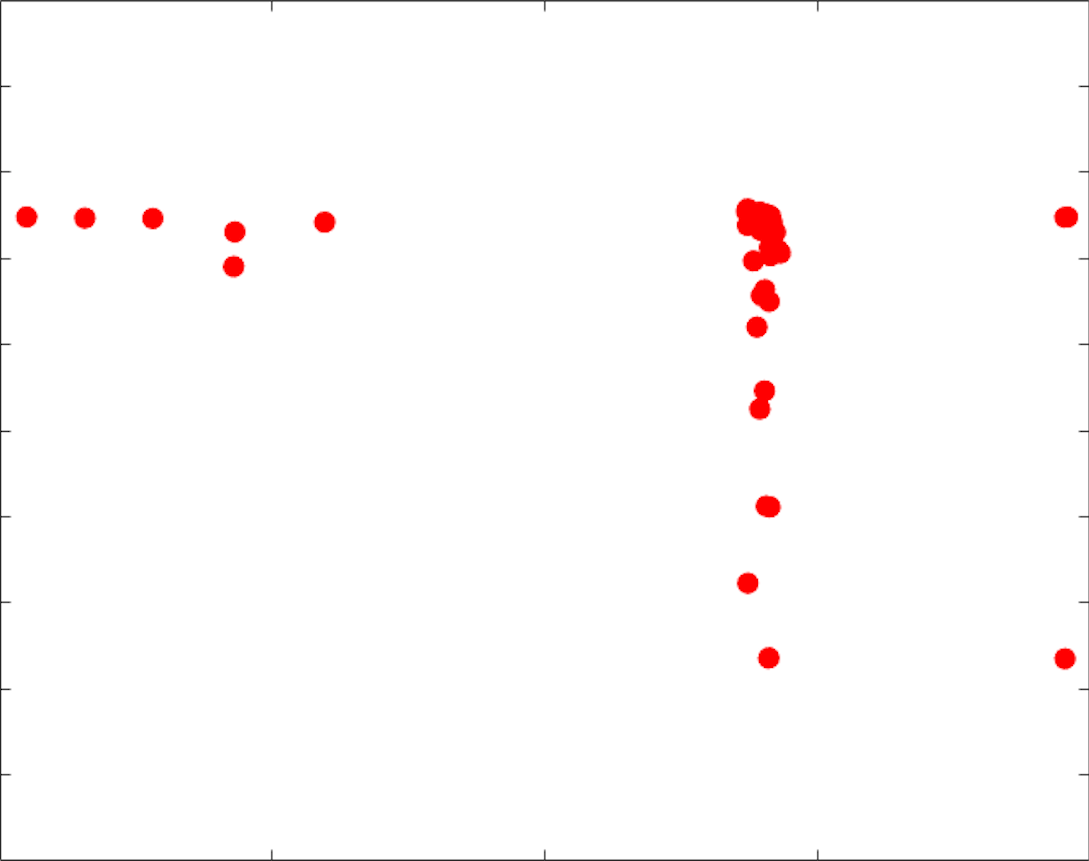} \hspace{0.15cm}
				\includegraphics[width=0.30\textwidth]{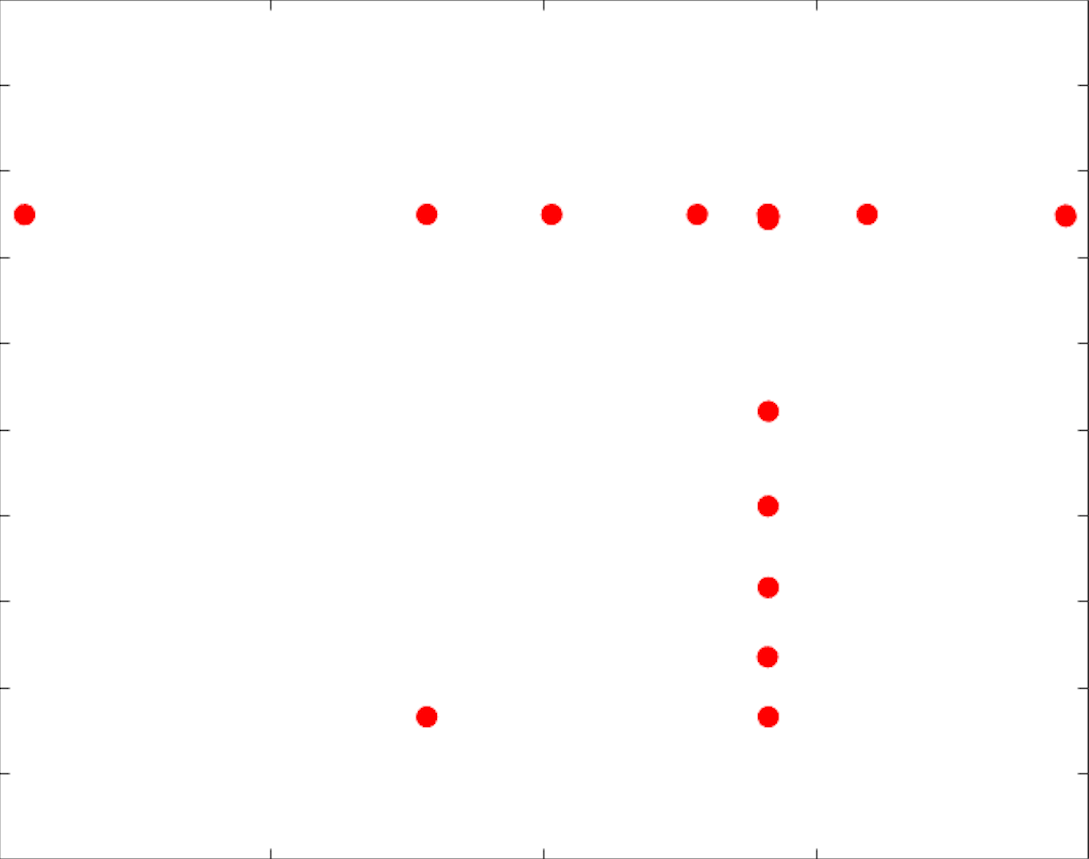}
			\caption{Particle grid iterations for $P=DM=2$, $\npar=10$: Iterations $k=0$, $k=1$,  $k=5$, $k=25$,  $k=56$,  $k=90$.
				\label{Fig:ParticleSwarm}}
		\end{center}
\end{figure}

The following observation is then immediate.
\begin{lemma} \label{Lem:neighborrelation}
The next neighbor relation does not change in the course of the iteration, i.e., the index set $\cI(\bi)$ is independent of the iteration $k$.\hfill\qed
\end{lemma}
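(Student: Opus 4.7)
The plan is to observe that the definition
\[
\cI(\bi) = \{ \bj=(j_1,\dots,j_P)\in \{1,\dots,\npar\}^P : |i_s - j_s|\le 1,\ 1\le s\le P \}
\]
is purely combinatorial: it depends only on the multi-index $\bi$ and on the fixed grid size $\npar$, but nowhere on the actual positions $\bp^{(k)}_\bj\in (-1,1)^P$ of the particles. Consequently, once one unfolds the definition, the claim reduces to showing that the update rule never re-labels particles.

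The proof then proceeds in three short steps. First, I would note that the initialization in Algorithm \ref{basisgen2} establishes a bijection between the index set $\{1,\dots,\npar\}^P$ and the particles making up $\bP^{(0)}$. Second, I would observe that the update \eqref{stepparticle} maps the particle with index $\bi$ at stage $k$ to a new position $\bp^{(k+1)}_\bi$ that is still recorded under the same index $\bi$; in particular, for every $k$ one has $\bP^{(k)} = \{ \bp^{(k)}_\bi : \bi\in\{1,\dots,\npar\}^P\}$, with the same indexing as at $k=0$. Third, since $\cI(\bi)$ is a subset of $\{1,\dots,\npar\}^P$ singled out by a condition that only mentions indices (and the toroidal boundary identification, which is likewise fixed once $\npar$ is fixed), the set $\cI(\bi)$ is literally the same at every iteration.

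I do not expect any hard part: the statement is essentially a restatement of the design principle that the particle grid algorithm acts on \emph{positions} while leaving the \emph{index structure} untouched. The only point worth spelling out carefully is that the boundary wrap-around described just before the lemma enlarges $\cI(\bi)$ for boundary indices in a way that still depends only on $\bi$ and $\npar$, so invariance under $k$ is preserved in that case as well.
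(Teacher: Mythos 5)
Your proposal is correct and matches the paper's reasoning: the paper presents this as an immediate observation (with no written proof), precisely because $\cI(\bi)$ is defined purely combinatorially from $\bi$ and $\npar$, while the update \eqref{stepparticle} only moves positions and never re-indexes particles. Your spelled-out version, including the remark about the toroidal boundary identification depending only on $\npar$, is exactly the argument the authors leave implicit.
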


\subsection{Complexity} 
Let $K$ be the number of iterations and $\mpar$ the number of particles. In order to compute $\hat J_u$, we sample $\Omega\subset\R^d$ by $n\in\N$ quadrature points for computing the $L_2$-norm. Hence, we need to compute point values of $u$ and $u_\delta$ at $n^d$ points. For each quadrature point, the evaluation of $u_\delta$ requires to evaluate $\varphi_i$, $i=1,...,N$ and $v_j(a_j^\top\cdot+b_j)$, $j=1,...,M$, which is a total of $\text{Cost}(\hat J_u) := N \cdot n^d + M \cdot (d+1) \cdot n^d$ operations for one evaluation of $J_u$ and $\mpar \cdot\text{Cost}(\hat J_u)$ evaluations for one particle grid $\bP^{(k)}$. For each iteration, we need to evaluate $\hat J_u$ for all grid points and per grid point, we have at most $3^P-1$ comparisons. 
In total, Algorithm \ref {basisgen2} thus requires in the order of 
\begin{align*}
	K\cdot \npar^P\cdot \big[ n^d\cdot (N + M\cdot (d+1)) + 3^P -1\big]
\end{align*}
operations, where $P=DM$ and $D\le d+1$. Hence, the algorithm gets to its limits for large particle dimension $D$ and large numbers $M$ of profiles. The dimension $N$ of the linear part only plays a minor role, especially as with sufficient memory the values of $\varphi_i$ can be computed once and be stored.

\subsection{Parallelization}
The particle grid algorithm has the advantage that it can easily be parallelized. In fact, in a shared memory environment, the current grid $\bP^{(k)}$  and any given particle $\bp_\bi^{(k)}$ is needed for determining the position $\bp_\bi^{(k+1)}$ in the next iteration. Hence, all such computations can be performed in parallel without further communication, which leads to linear speedup w.r.t.\ the numbers of processors. For distributed memory one would pass the positions of the neighbors of $\bp_\bi^{(k)}$ to the processor handling this particle.

\section{Numerical experiments}
\label{Sec:5}

In this section, we present results of some of our numerical examples. The main focus is the question how well a Linear/Ridge expansion is able to approximate certain functions and also the quantitative performance of the presented particle grid algorithm.

We start by applying our particle grid algorithm for determining a Linear/Ridge approximation of type \eqref{eq:approx}, i.e., we seek for an approximation in $U_{N,M}$ consisting of the sum of a linear combination of functions $\Phi_N$ and profiles $\cV_M$ for selected choices of such sets $\Phi_N$ and $\cV_M$. 

\subsection{Approximation properties}

First, we choose functions $u$ which can be written exactly in the form \eqref{eq:approx}, i.e., $u\in U_{N,M}$  and approximate the coefficients $\by$ in \eqref{eq:coeffs} by our particle grid algorithm. Doing so, we can monitor the error $\| u-u_\by\|_{0}$.

\subsubsection{Polynomials and Wavelets}
We start by a problem in two dimensions, $(t,x) \in \Omega = [0,1] \times [-1,1]$ which might be interpreted as time and space.  It seems to be a straightforward choice to use $\Span(\Phi_N)=\mathcal{P}_r(\Omega)$, i.e., the space of algebraic polynomials of degree at most $r\in\N$. Here we choose $r=2$, so that $N=6$ and for simplicity we use the monomial basis, i.e., 
\begin{equation*}
	\varphi_1(t,x) = 1, \ \varphi_2(t,x) = t, \ \varphi_3(t,x) = x, \ \varphi_4(t,x) = t^2, \ \varphi_5(t,x) = t x, \ \varphi_6(t,x) = x^2.
\end{equation*}
Of course, we could use a more stable basis $\Phi_6$ (e.g.\ orthonormal polynomials), but we are also interested to see how the algorithm can cope with ill-conditioned sets $\Phi_N$, which are possibly even allowed to be linearly dependent. 
 
 Concerning the profiles, we choose wavelets as it is known that dilates and translates of wavelets yield frames or bases of $L_2(\R^2)$. That makes them good candidates for profiles. Specifically, we take $M=4$ and
 \begin{compactitem}
 	\item $v_1$ as Mexican Hat, 
	\item $v_2$ as the Haar wavelet, 
	\item $v_3$ as Morlet wavelet,
	\item and $v_4$ to be the Hockeystick, which is also known as the ReLU activation function in neural networks,
 \end{compactitem}
see Figure \ref{Fig:3tr}. 
The function $u$ to be approximated is a Linear/Ridge expansion with $\alpha_i=1$, $i=1,...,6$, $b_j=0$, $c_j=1$, $j=1,...,4$ and $a_1 = (1,1)^{\top}$, $a_2 = (1,\sqrt{2}-1)^{\top}$, $a_3 = (1,-1)^{\top}$ and $a_4 = (1,\sqrt{2}+1)^{\top}$. The arising function is displayed in Figure \ref{Fig:3tl}. As we see, the shape of $u$ is rather complex. As the required discretization for performing computations, we use $h_t=\frac{1}{128}$, $h_x=\frac{1}{64}$, so that we have $129$ grid points for both variables. 
For Algorithm \ref{basisgen2} we choose as above $\delta=1/3$ and $\mpar = 6^4 = 1296$ particles. In Figure \ref{ErrorperIterations-a}, we monitor the $L_2$-error over the number of iterations. We obtain monotone convergence, without rate of course. For example, in order to reach an error smaller than $10^{-4}$ we need about 250 Iterations. After about 1000 iterations, the $L_2$-error is machine accuracy, i.e., $9.91\cdot 10^{-15}$. The approximation $u_\delta$ is shown in Figure \ref{Fig:3bl}.
\begin{figure}[!htb]
	\begin{center}
		\begin{subfigure}[b]{0.49\textwidth}
			\caption{Function $u$.\label{Fig:3tl}}
			\includegraphics[width=0.99\textwidth]{./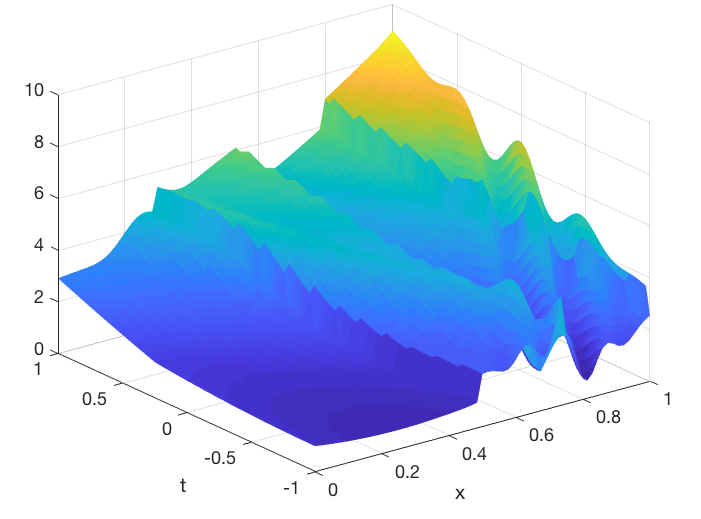}
		\end{subfigure}
		\hfil
		\begin{subfigure}[b]{0.49\textwidth}
			\caption{Wavelet-type profiles.\label{Fig:3tr}}
			\includegraphics[width=0.99\textwidth]{./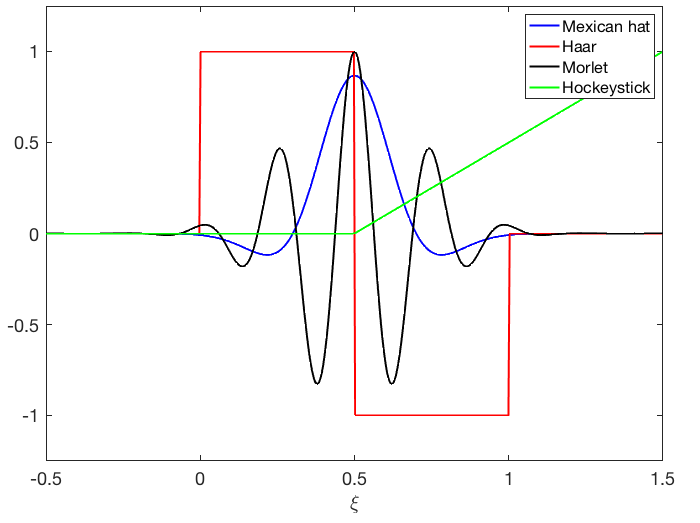}
		\end{subfigure}
		
		
		\begin{subfigure}[b]{0.49\textwidth}
			\caption{Linear/Ridge approximation.\label{Fig:3bl}}
			\includegraphics[width=0.99\textwidth]{./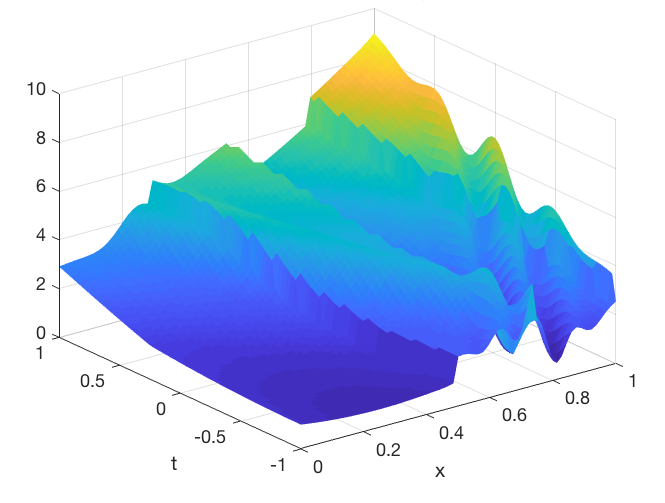}
		\end{subfigure}
		\hfil
		\begin{subfigure}[b]{0.49\textwidth}
		\caption{Error decay.\label{ErrorperIterations-a}}
		\pgfplotsset{every tick label/.append style={font=\tiny}}
			\begin{tikzpicture}[scale=0.99]
			\begin{axis}[
				width=0.99\textwidth,
				ymode=log,
				xlabel = {\tiny{Iterations}}, 
				ylabel={\tiny{$L_2$-error}},
				cycle list name=color list,
				unbounded coords=jump,
				legend pos=north east,
				legend cell align=left,
				legend style={font=\footnotesize},
				clip=false,
				]
				\addplot+[line width=1.1pt,black] table[y=L2error] {./Fig4a.txt};
				\end{axis}
			\end{tikzpicture}
		\end{subfigure}
	
		\caption{Approximation of function given as sum of quadratic polynomials ($N=6$) and $M=4$ wavelets of different type.}
		\label{NumExampleswavelet}
	\end{center}
\end{figure}

\subsubsection{Discontinuous staircase function}
Even though the shape of the function in our first example is complex, it is a smooth function\footnote{Of course, the Haar wavelet is discontinuous, but a quite easy and single function to be approximated.}. Hence, we are now going to consider a function which consists of eight jumps along straight lines which are rotated in order to exclude a tensor product approximation. We use again $\Omega = (0,1) \times (-1,1)$. The profiles are chosen as the jump function displayed in Figure \ref{NumExamplesjumps} top right. In order to investigate the role of non-linearly independence of the profiles, we take $v_1=\cdots =v_8 = \mathbbm{1}_{\R^+}$, i.e., $M=8$ identical profiles. We forego the linear part here, i.e., $N=0$. 
The function $u$ to be approximated takes the form \eqref{eq:approx} with $N=0$ and $M=8$ choosing $b_j=0$, $c_j =1$, $j=1,...,8$ and directions according to the angles $\frac{\pi}{9}$, so that we obtain a staircase-like function as displayed in Figure \ref{NumExamplesjumps} bottom left.

For the discretization, we choose as above $129$ grid points in both coordinate directions. For the particle grid algorithm, we use $\delta=1/3$, $\mpar=3^8=6561$ and $\mpar=4^8=65563$ particles, respectively, where we note that the algorithm did not converge for $2^8$ particles. The error decay is shown in Figure  \ref{ErrorperIterations-b}. The reason why we show results for $\mpar=3^8$ and $\mpar=4^8$ is the fact that we observe a stagnation of the error for $\mpar=3^8$ after $50$ iterations, whereas $\mpar=4^8$ yields machine accuracy after 49 iterations. This shows that one might be forced to use a large number of particles in order to reach high accuracies. 
After 50 iterations, we achieved an $L_2$-error of $0$. In Figure \ref{NumExamplesjumps}, we show the staircase-type step function that is also the computed approximation after 50 iterations.
\begin{figure}[!htb]
	\begin{center}
		\begin{subfigure}[b]{0.49\textwidth}
		\caption{Staircase-type step function.\label{NumExamplesjumps}}
		\includegraphics[width=0.99\textwidth]{./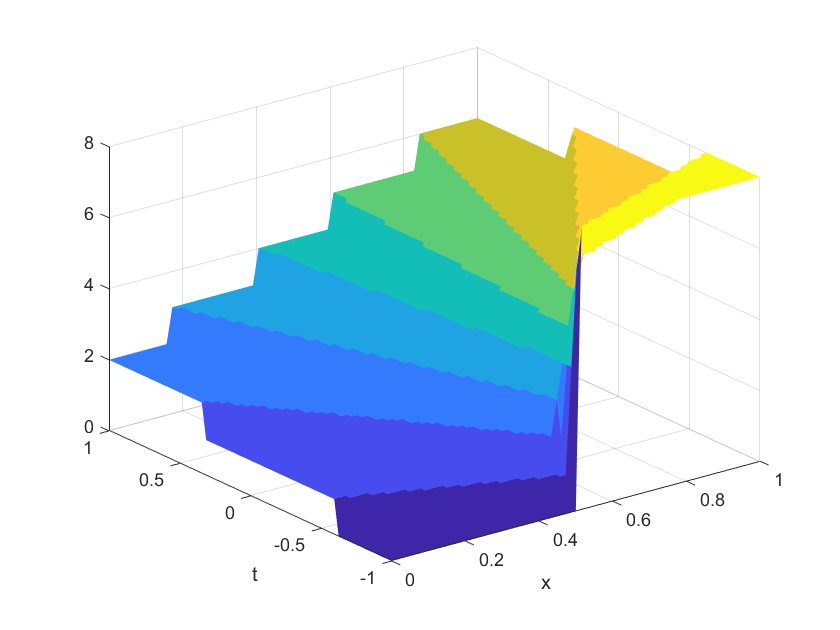}
		\end{subfigure}
		\hfil
		\begin{subfigure}[b]{0.49\textwidth}
		\caption{Error decay.\label{ErrorperIterations-b}}
			\pgfplotsset{every tick label/.append style={font=\tiny}}
			\begin{tikzpicture}[scale=0.99]
			\begin{axis}[
				width=0.99\textwidth,
				ymode=log,
				xlabel = {\tiny{Iterations}}, 
				ylabel={\tiny{$L_2$-error}},
				cycle list name=color list,
				unbounded coords=jump,
				legend pos=north east,
				legend cell align=left,
				legend style={font=\footnotesize},
				clip=false,
				]
				\addplot+[line width=1.1pt,black] table[y=npar4,select coords between index={1}{49}] {./Fig4b.txt};
				\addplot+[line width=1.1pt,blue] table[y=npar5,select coords between index={1}{46}] {./Fig4b.txt};
				\addlegendentry{$\npar=3$}
				\addlegendentry{$\npar=4$}
			\end{axis}
			\end{tikzpicture}
		\end{subfigure}

		\caption{Approximation of a given staircase-type step function.}
		\label{ErrorperIterations}
	\end{center}
\end{figure}

\subsection{Non-exact approximation functions}

So far, we used the algorithm to approximate functions that can exactly be represented in terms of the chosen families $\Phi_N$ and $\cV_M$. This shows how fast (or slow) the algorithm is able to find the function in terms of directions and offsets. Now, we are going to consider functions $u$ that cannot be represented exactly by $u_{\delta}$, i.e., $\inf_{\delta} \| u - u_{\delta}\|>0$. To this end, consider for $c \geq 1$ the function 
\begin{equation}
	u(t,x; \mu) := \sum_{k=1}^{\infty} \frac{1}{k!} \cos\big[  2 \pi ( k \tfrac{\sqrt{c}}{10}  \,  t +  x)  \big],
	\qquad (t,x) \in \Omega := (0,1) \times (-1,1).
\end{equation}
For the linear part of the approximation, we choose \emph{snapshots} $\varphi_j(t,x) = u(t,x;j)$, for $j=1,...,N$, and the profiles are chosen as $v_i :=\cos(2\pi \cdot)$, $i=1,...,M$ for $M\in\{0,...,4\}$. In Figure \ref{ErrorperIterations-2}, we display the $L_2$-error for $u(t,x) = u(t,x; 100)$ and different values of $N$ and $M$. For the discretization, we choose as above 129 grid points in both coordinate
directions and for the particle grid algorithm, we used  $\delta = 1/3$, $\mpar = 5^M$ particles. We do not obtain monotone convergence as $N$ grows, which might be a stability issue. On the other hand, however, the error decreases for fixed $N$ and increasing $M$ in a monotonic manner.
\begin{figure}[!htb]
	\begin{center}
			\begin{tikzpicture}[scale=0.95]
			\begin{axis}[
				width=0.99\textwidth,
				height=0.35\textheight,
				ymode=log,
				xlabel = $N$, 
				ylabel=$L_2$-error,
				cycle list name=color list,
				unbounded coords=jump,
				legend pos=north east,
				legend cell align=left,
				legend style={font=\footnotesize},
				clip=false,
				]
				\addplot+[line width=1.5pt,black,mark=x] table[y=0] {./Fig6.txt};
				\addplot+[line width=1.5pt,blue,mark=x] table[y=1] {./Fig6.txt};
				\addplot+[line width=1.5pt,red,mark=x] table[y=2] {./Fig6.txt};
				\addplot+[line width=1.5pt,violet,mark=x] table[y=3] {./Fig6.txt};
				\addplot+[line width=1.5pt,green,mark=x] table[y=4] {./Fig6.txt};
				\addlegendentry{$M=0$}
				\addlegendentry{$M=1$}
				\addlegendentry{$M=2$}
				\addlegendentry{$M=3$}
				\addlegendentry{$M=4$}
			\end{axis}
			\end{tikzpicture}
		\caption{Approximation of an infinite series: $L_2$-error for different sizes of $\Phi_N$ and $\cV_M$.}
		\label{ErrorperIterations-2}
	\end{center}
\end{figure}
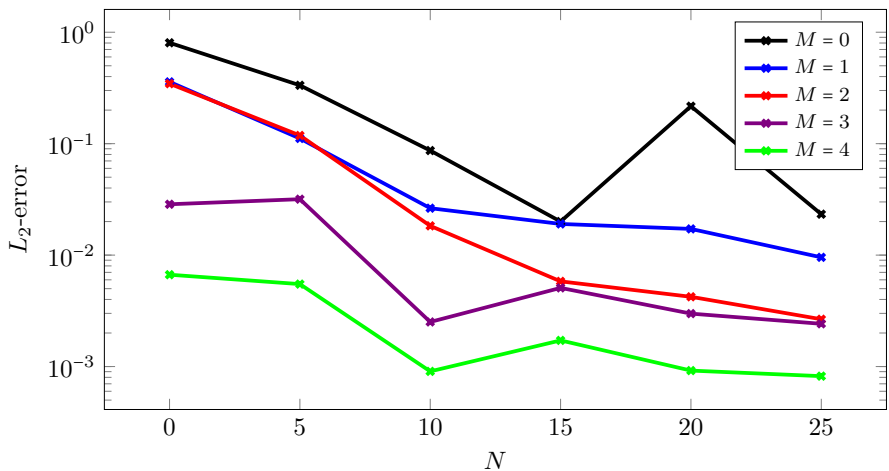

\subsection{Parametric Partial Differential Equations (PPDEs)}\label{Sec:PPDE}

Next, we consider two PPDEs that depend on parameters $\mu\in\R^P$ and investigate the approximation of the solution $u(\mu)$ for various parameter values. The first problem is a stationary one, the second is instationary; both are defined in $\Omega= (0,1)^2$, which is interpreted as a time/space-domain for the wave equation. For both cases, we use $\delta=1/3$ and $\mpar =  11^2 = 121$ particles.

\subsubsection{Case 1: Thermal block } \label{thermal}
The thermal block is a classical problem for model reduction, \cite{Haasdonk:RB}. We consider the stationary case, which is a Poisson problem with piecewise constant coefficients, which serve as parameters. In fact, we decompose $\overline{\Omega}$ by $\overline{\Omega}_i := [0,1] \times [\frac{i-1}{4}, \frac{i}4]$, $i=1,...,4$. The bilinear form of the variational formulation of the problem reads for $\mu=(\mu_1,...,\mu_4)\in\R^4$, $P=4$, as follows
\begin{align*}
	a(u,v;\mu) = \sum_{i=1}^{4} \mu_i \int_{\Omega_i} \nabla u(x)\, \nabla v(x)\, dx.
\end{align*} 
The right-hand side and boundary data can be retrieved from \cite{Haasdonk:RB}, and the formula for the exact solution can easily be seen to be for $x=(x_1,x_2)\in\Omega$ as follows
\begin{align*}
	u(x; \mu) =  \sum_{i=1}^4 \frac{1}{\mu_i} \varphi_i(x) ,  
	\quad  
	\varphi_i(x) 
	=   \begin{cases}
		 \frac{1}{4} , 	& 0 \le x_2 < \frac{i-1}{4}  ,  \\
		-x_2 + \frac{i}{4}, 	& x_2 \in [ \frac{i-1}{4}, \frac{i}{4}] ,  \\
		0, 			& \frac{i}{4} < x_2 \leq 1,
	\end{cases}    
\end{align*}
see the left part of Figure \ref{Fig:ExSolPDE}. 
We consider different parameters $\mu \in [0.1,10]^4$.

Due to the specific form of the solution, we use the linear approximation with $N=4$ and choose $\Phi_4 := \{ \varphi_1,  \varphi_2,  \varphi_3,  \varphi_4 \}$ with the functions defined above. We took $M=2$ arbitrarily chosen profiles. The reason for this choice is as follows: Since the solution can be approximated quite accurately by the linear combination of the \emph{snapshots} $\varphi_i$ (which is due to the fact that such snapshots are known from the Reduced Basis Method to yield a very accurate approximation), the particle grid algorithm should automatically detect that no profiles are needed. As we can see from the results in Table \ref{table_thermalwave} this is in fact the case -- the algorithm finds the solution up to machine accuracy after only $1$ iteration of the particle grid algorithm. We stress the fact that we tested much more parameter values and always observed this behavior.

\begin{figure}[!htb]
	\begin{center}
			\includegraphics[height=0.25\textheight]{./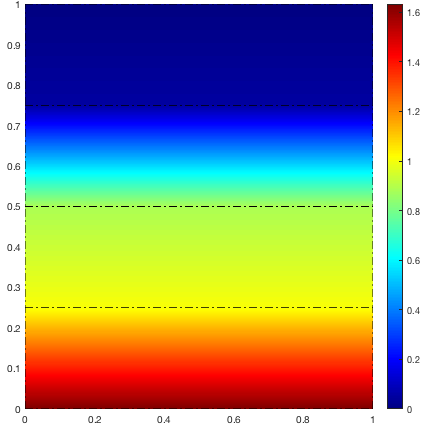} \hspace{0.7cm}
			\includegraphics[height=0.25\textheight]{./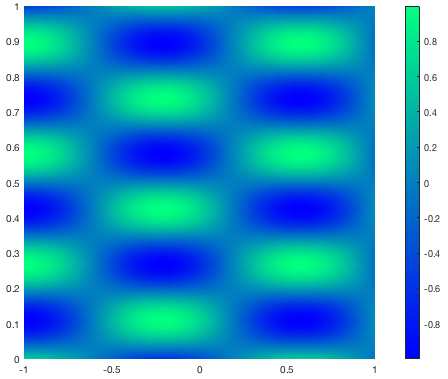}
			\caption{Exact solutions for the PDE examples 1 and 3. Left: $4 \times 1$ thermal block for $\mu=(0.4, 2, 0.3, 5)$. Right: Solution to wave equation for $\mu=0.8$.}
			\label{Fig:ExSolPDE}
	\end{center}
\end{figure}

\subsubsection{Case 2: Linear transport equation} \label{transport}
Recalling Example \ref{Ex:1}, we consider the parametric linear transport equation with initial condition $u_0$, which we choose as a single profile, i.e., $M=1$. For the linear part we use the same $\Phi_4$ as in \S\ref{thermal}, i.e., the algorithm is supposed to set the corresponding coefficients $\alpha_i$ to zero.

\subsubsection{Case 3: The wave equation} \label{wave}
As in Example \ref{Ex:2}, we consider the univariate parametric wave equation $u_{tt}(t,x) - \mu^2 \, u_{xx}(t,x)=0$ for $(t,x)\in\Omega$ along with initial conditions. We choose the initial data in such a manner that the solution reads
\begin{align*}
u(t,x; \mu) 
   &= 0.5  \,  \sin(10 x + 10 \mu t) + 0.5  \,  \sin(10 x - 10 \mu  t),
\end{align*}
which is a superposition of two ridge functions. Consequently, we set $M=2$ and $\cV_2  :=\{ v_1, v_2 \}  = \{  \sin(10 \cdot), \sin(10 \cdot ) \}$. Again, for the linear part we use $\Phi_4$ as in \S\ref{thermal}.

\begin{table}[!htb] 
	\begin{center}
	\begin{tabular}{r|l|l|r|l}
		Case & PPDE & parameter $\mu$ &  no.\ iterat.\ $K$ & $L_2$-error \\[2pt] \hline\hline
		1 	& Thermal block & $(0.1,10,1,0.6)$ & $1$ & $5.1019e-15$ \\[2pt]
		1 	& Thermal block & $(10,2,0.1,0.5)$ & $1$ & $1.4446e-14$ \\[2pt]
		1 	& Thermal block & $(0.4,2,0.3,5)$ & $1$ & $6.0861e-15$ \\[2pt] \hline
		2 	& Transport & $1/4$ & $12$ & $7.1348e-05$  \\[2pt]
		2 	& Transport & $1/4$ & $ 66$ & $4.6205e-16$  \\[2pt]  \hdashline
		2 	& Transport & $1$ & $19$ & $3.0119e-05$  \\[2pt]
		2 	& Transport & $1$ & $70$ & $9.3829e-17$  \\[2pt] \hdashline
		2 	& Transport & $4$ & $24$ & $6.1985e-05$  \\[2pt]
		2 	& Transport & $4$ & $67$ & $0$  \\[2pt]  \hline
		3 	& Wave & $1/4$ & $ 20$ & $7.6682e-05$  \\[2pt]
		3 	& Wave & $1/4$ & $ 83$ & $8.9850e-16$  \\[2pt]  \hdashline
		3 	& Wave & $1$ & $21$ & $8.2400e-05$  \\[2pt]
		3 	& Wave & $1$ & $86$ & $3.1765e-16$  \\[2pt] \hdashline
		3 	& Wave & $4$ & $28$ & $4.3012e-05$  \\[2pt]
		3 	& Wave & $4$ & $83$ & $8.9850e-16$ \\ \hline
	\end{tabular}
	\end{center}
	\caption{Parametric PDEs: Errors and iterations for both examples and different parameter values.  \label{table_thermalwave}}
\end{table}

The results are shown in Table \ref{table_thermalwave}. As expected, the hyperbolic wave equation is a harder problem than linear transport, which in turn is much harder problem than the thermal block. The numbers of the required iterations of the particle grid algorithm are significantly higher in order to reach a desired tolerance.  On the other hand, we see that the algorithm is able to dismiss all the linear functions. The algorithm in fact converges and even reaches machine accuracy -- at the expense of more iterations.

\subsection{Higher dimensions}
Next, we are considering a problem in higher dimensions, namely travelling plane waves (i.e., a wave function that is constant over any plane that is orthogonal to a fixed direction in space). The general form of a plane wave thus reads for a given normalized direction $n = (n_1,n_2,n_3)^{\top} \in \R^3$, $\|n\| = 1$ and some velocity $c$ as follows
\begin{align*}
	A(t, x ) := v \big( n^{\top} x -c \ t  \big), \quad v:\R \to \R, 
\end{align*}
which is actually a ridge function. 
It is well-known that $A$ can also be obtained as the solution of the 3d-wave equation $A_{tt} - c^2 \Delta A=0$, which is the connection to our previous example. 
From a physical point of view, we shall assume that the plane waves emerge from two different light sources, i.e., for fixed $t$ it takes the form 
\begin{align*} 
	u(x) := \sum_{i=1}^2 v_i \big( n_i^{\top} x -c \ t  \big),
\end{align*}
which we use as function to be approximated. We choose the specific profiles $v_1 := v_2 := \sin$, i.e., again, identical profiles. We fix time and velocity as $t=1$, $c=1$. Doing so, we get the directions $\ba\in \R^6$ for $d=3$, $M=2$, i.e., $D=nM=6$ as $a_j=n^j\in\R^3$, $j=1,2$ and $\bb=(b_1,b_2)$, $b_1=b_2=-c$. We use the specific choices  $n_1 := ( -1/\sqrt{2}, 1/\sqrt{2}, 0)$ and $n_2 := (0,- 1/\sqrt{2}, 1/\sqrt{2})$.

We are interested in the convergence history of the particle grid algorithm. To this end, we fix the number of particles $\npar=4$ per direction, i.e., a total of $\mpar=\npar^D = 4^6=4096$. The results are shown in Figure \ref{3dtravellingcap}. We obtain quite fast convergence at the early stages which then slows down.
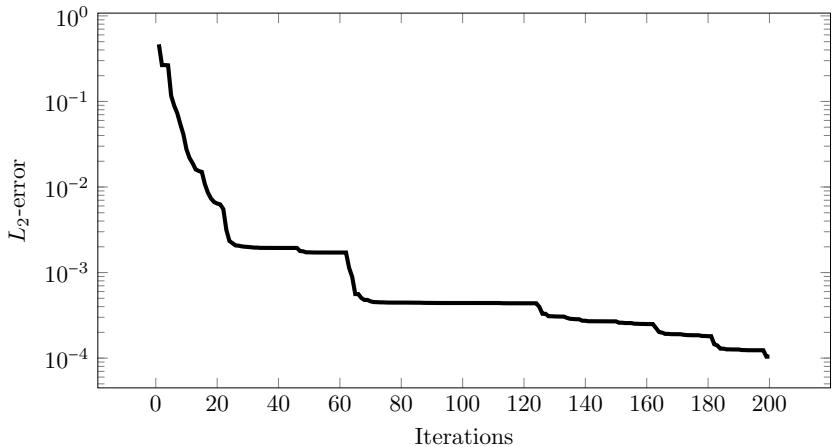
\begin{figure}[!htb] 
	\begin{center}
			\begin{tikzpicture}[scale=0.89]
			\begin{axis}[
				width=0.99\textwidth,
				height=0.35\textheight,
				ymode=log,
				xlabel = Iterations, 
				ylabel=$L_2$-error,
				cycle list name=color list,
				unbounded coords=jump,
				legend pos=north east,
				legend cell align=left,
				legend style={font=\footnotesize},
				clip=false,
				]
				\addplot+[line width=1.75pt,black] table[y=L2error] {./Fig8.txt};
				\end{axis}
			\end{tikzpicture}
		\caption{3d planar wave. $L_2$-error over iterations for fixed number $\npar=69$ of particles. The minimal error is $1.0404 10^{-4}$.} \label{3dtravellingcap}
	\end{center}
\end{figure}

\subsection{Training directions for parametric PDEs}
Let us reconsider parameter-dependent PDEs as in \S\ref{Sec:PPDE} above. Within a model reduction context, a reduced model is typically determined offline within a training phase. In our setting the sets $\Phi_N$ and $\mathcal{V}_M$ would thus be determined in an offline training.

In an online environment (typically suitable for multi-query or realtime situations), new parameters $\mu\in\R^P$ are given and an approximation is to be determined extremely fast -- in online complexity. If we would determine the online approximation in terms of $u_\delta$ in \eqref{eq:approx}, we would need to determine (optimal) directions that are parameter-dependent, i.e., $\ba(\mu)$ and $\bb(\mu)$, which might be computationally costly in the sense that many iterations of the particle grid algorithm would be needed, destroying online efficiency.

In order to speedup this procedure, we suggest to train the mapping $\mu \mapsto (\ba(\mu), \bb(\mu))$ offline as follows: For a given (or to be determined) set $\{ \mu_1,..., \mu_{n_{\text{train}}} \}$ of  training parameters, we determine (highly accurate) approximations $\ba(\mu_i), \bb(\mu_i)$, $i=1,...,n_{\text{train}}$, offline by the particle grid algorithm. Then, we determine a interpolation $\mu \mapsto (\check \ba(\mu), \check \bb(\mu))$ of those offline data such that the interpolation error $\|(\ba(\mu), \bb(\mu))-(\check{\ba}(\mu), \check{\bb}(\mu))\|$ is small, at least for a number of test samples $\mu$. Of course, this interpolation error influences the overall online approximation error. We are going to investigate this influence.

To this end, we consider the solution $u(\cdot,\cdot\,;\mu)$ of the parametric linear transport problem $u_t + \mu^{-1/2} \, u_x = 0$, $u(0)=u_0$, since it is known that this problem also results in poor approximation rates using standard linear model reduction such as the Reduced Basis Method, \cite{OR16}. In addition, the function $\mu\mapsto \mu^{-1/2}$ is much harder for the interpolation as a polynomial parameter-dependence. We choose $10$ equidistant training  parameters $\mu_i\in [0.1,1]$. Since the offset is zero here, we only determined the corresponding directions $a(\mu_i)$, $i=1,...,10$, which are real numbers here. The left graph in Figure \ref{fig:interp_mu} shows a cubic spline interpolation of the obtained data. We also display the exact curve $\mu\mapsto a(\mu)$ by determining high resolution approximations of optimal directions for $100$ parameters $\mu$ by the particle grid algorithm. As we see, the error is almost negligible.

Next, we computed an online approximation for a new parameter $\mu$ as follows: (1) Evaluate the interpolation to retrieve $\check{a}_j(\mu)$, set $b_j=0$, $N=0$; (2) compute the coefficients $c_j$ and obtain an approximation $\check u_\delta$ in \eqref{eq:approx}. This is to be compared with the function $u_\delta$ using the exact direction $a_j(\mu)$. On the right in Figure \ref{fig:interp_mu}, we display these errors $\| \check u_\delta(\cdot,\cdot;\mu)- u_\delta(\cdot,\cdot;\mu)\|_{0}$ for different initial conditions $u_0$ which also serve as the single profile. We used the same knots for the interpolation and see that the quantitative errors depend on the initial condition or more precisely on the derivative of the profile. However, please note the range of the vertical axis, which indicates that all errors are in fact in a comparable range.

\begin{figure}[!htb]
	\begin{center}
      \includegraphics[width=0.452\textwidth]{./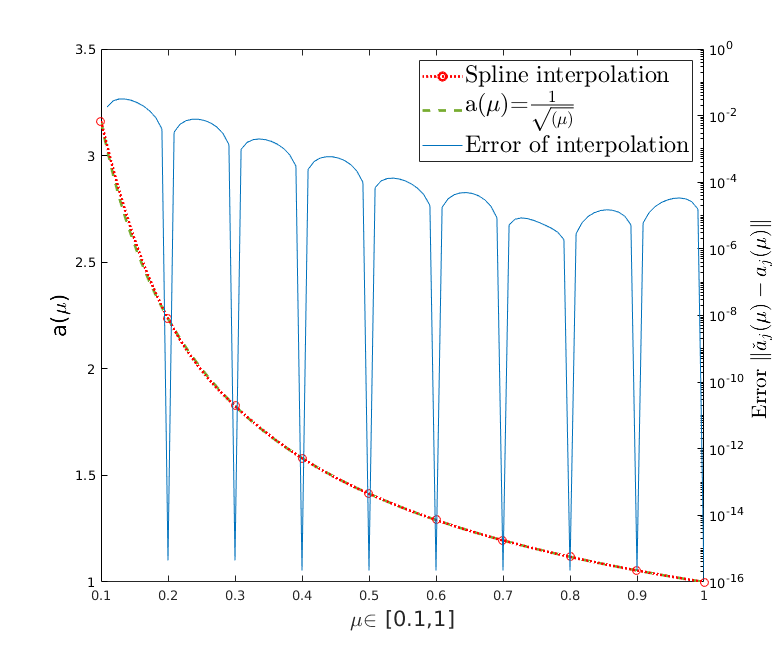}
      \includegraphics[width=0.49\textwidth]{./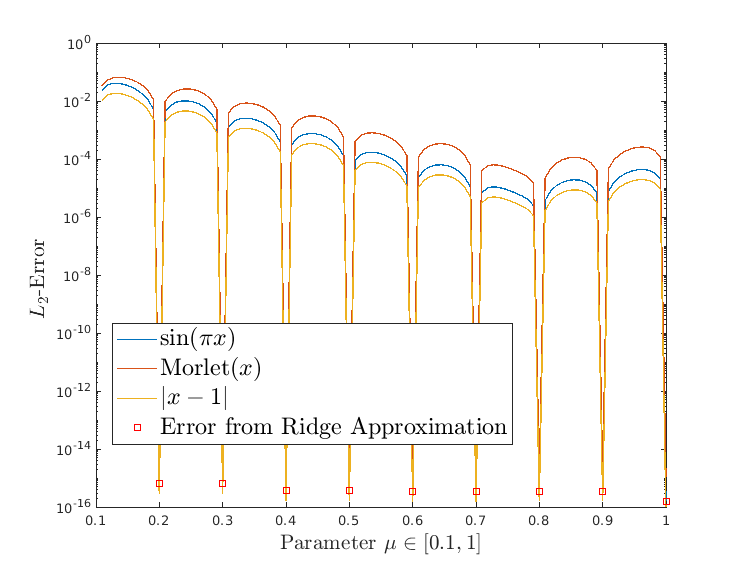}
		\caption{Training directions for the parametric linear transport equation: Spline interpolation of directions (left) and errors for the obtained ridge approximation (right).}
		\label{fig:interp_mu}
	\end{center}
\end{figure}

\subsection{Conclusions}
As we did much more experiments than we can report here (due to page limitation) let us collect some observations that we have seen and our corresponding conclusions.
\begin{compactitem}
	\item Even though we have seen that the optimization problem arising from the Linear/Ridge approximation is a challenging task, the particle grid algorithm often works very well.
	\item For approximating a given function, the performance seems to be better for smooth functions. However, the algorithm yields also good results for  non-continuous or multivariate functions, even though machine accuracy is harder to reach then. 
	\item By treating time \enquote{just as another variable}, the presented approach can handle stationary and instationary problems in the same manner.
	\item Choosing the number of particles sufficiently large, we were always able to reach machine accuracy.
	\item The algorithm is able to detect if a linear approximation is already sufficient to reach a desired accuracy. Hence, the scheme is robust in the considered cases of fast and of slow decay of the Kolmogorov $N$-width. We anticipate that this is restricted to (P)PDEs with linear characteristics.
\end{compactitem}

\section{Outlook}
\label{Sec:6}
The above described results of our numerical experiments seem to indicate that this path might be continued. Of course, we are aware that research in several directions is required, e.g.
\begin{compactitem}
	\item the introduced particle grid algorithm is based upon the particle swarm heuristics. There is no rigoros convergence analysis, which is a significant drawback in particular compared to linear RBMs, where online efficiency and a posteriori error control is certified. 	
	One might think of using (stochastic) gradient-descent methods in combination with backpropagation/automatic differentiation as known from the training of neural networks. Another option might be to start by the particle swarm algorithm and then use the result as starting point for a decent method. 
	\item in the current form, the particle grid algorithm is not yet online efficient, which would be required for using it within a multi-query and/or realtime environment. Also here, techniques from neural networks might help.
	\item last, but not least, we did not focus on the training of $\Phi_N$ and $\cV_M$, but merely viewed them as being given. 
\end{compactitem}

\bibliographystyle{abbrv}
\bibliography{./GJU.bib}
\end{document}